\theoremstyle{plain}
\newtheorem{theorem}{Theorem}[section]
\newtheorem{proposition}[theorem]{Proposition}
\newtheorem{lemma}[theorem]{Lemma}
\newtheorem{corollary}[theorem]{Corollary}
\theoremstyle{definition}
\newtheorem{example}[theorem]{Example}
\newcommand{\F}{{\scriptscriptstyle\mathsf{F}}}
\newcommand{\tp}{{\scriptscriptstyle\mathsf{T}}}
\newcommand{\ha}{{\scriptscriptstyle\mathsf{H}}}
\newcommand{\p}{{\scriptscriptstyle+}}
\newcommand{\m}{{\scriptscriptstyle-}}
\let\O\undefine
\DeclareMathOperator{\O}{O}
\DeclareMathOperator{\U}{U}
\DeclareMathOperator{\GL}{GL}
\DeclareMathOperator{\diag}{diag}
\DeclareMathOperator{\tr}{tr}
\DeclareMathOperator{\spn}{span}
\DeclareMathOperator{\op}{\mathsf{op}}
\DeclareMathOperator{\add}{\mathsf{add}}
\DeclareMathOperator{\inv}{\mathsf{inv}}
\DeclareMathOperator{\mul}{\mathsf{mul}}
\DeclareMathOperator{\res}{res}
\DeclareMathOperator{\sign}{sign}
\DeclareMathOperator{\GF}{GF}
\DeclareMathOperator{\num}{\texttt{\#}}
\begin{document}
\title{Complex matrix inversion via real matrix inversions}
\author[Z.~Dai]{Zhen~Dai}
\author[L.-H.~Lim]{Lek-Heng~Lim}
\address{Computational and Applied Mathematics Initiative, University of Chicago, Chicago, IL 60637-1514}
\email{zhen9@uchicago.edu, lekheng@uchicago.edu}
\author[K.~Ye]{Ke Ye}
\address{KLMM, Academy of Mathematics and Systems Science, Chinese Academy of Sciences, Beijing 100190, China}
\email{keyk@amss.ac.cn}

\date{}

\begin{abstract}
We study the inversion analog of the well-known Gauss algorithm for multiplying complex matrices. A simple version is $(A + iB)^{-1} = (A + BA^{-1}B)^{-1} - i A^{-1}B(A+BA^{-1} B)^{-1}$ when $A$ is invertible, which may be traced back to Frobenius but has received scant attention. We prove that it is optimal, requiring fewest matrix multiplications and inversions over the base field, and we extend it in three ways: (i) to any invertible $A + iB$ without requiring $A$ or $B$ be invertible; (ii) to any iterated quadratic extension fields, with $\mathbb{C}$ over $\mathbb{R}$ a special case; (iii) to Hermitian positive definite matrices $A + iB$ by exploiting symmetric positive definiteness of $A$ and $A + BA^{-1}B$. We call all such algorithms Frobenius inversions, which we will see do not follow from Sherman--Morrison--Woodbury type identities and cannot be extended to Moore--Penrose pseudoinverse. We show that a complex matrix with well-conditioned real and imaginary parts can be arbitrarily ill-conditioned, a situation tailor-made for Frobenius inversion. We prove that Frobenius inversion for complex matrices is faster than standard inversion by LU decomposition and Frobenius inversion for Hermitian positive definite matrices is faster than standard inversion by Cholesky decomposition.  We provide extensive numerical experiments, applying Frobenius inversion to solve linear systems, evaluate matrix sign function, solve Sylvester equation, and compute polar decomposition, showing that Frobenius inversion can be more efficient than LU/Cholesky decomposition with negligible loss in accuracy.  A side result is a generalization of Gauss multiplication to iterated quadratic extensions, which we show is intimately related to the Karatsuba algorithm for fast integer multiplication and multidimensional fast Fourier transform. 
\end{abstract}


\maketitle

\section{Introduction}\label{sec:intro}

The article is a sequel to our recent work in \cite{Zhen}, where we studied the celebrated Gauss multiplication algorithm $(A + iB)(C+ iD) =  (AC - BD) + i[(A+B)(C+D) -AC -BD]$ for multiplying a pair of complex matrices with just three real matrix multiplications. Such methods for performing a complex matrix operation in terms of real matrix operations can be very useful as floating point standards such as the IEEE-754  \cite{ieee} often do not implement complex arithmetic natively but rely on software to reduce complex arithmetic to real arithmetic \cite[p.~55]{Overton}. Here we will analyze and extend an inversion analogue of Gauss algorithm: Given a complex invertible matrix $A+i B \in \mathbb{C}^{n\times n}$ with $A,B \in \mathbb{R}^{n \times n}$, it is straightforward to verify that its inverse is given by
\begin{equation}\label{eq:Finv}
(A + i B)^{-1} = 
(A + BA^{-1}B)^{-1} - i A^{-1}B(A+BA^{-1} B)^{-1}
\end{equation}
if $A$ is invertible, a formula that can be traced back to Georg Frobenius  \cite{inversion_ref8}. In our article we will refer to all such algorithms and their variants and extensions as \emph{Frobenius inversions}.  While Gauss multiplication has been thoroughly studied (two representative references are \cite[Section 4.6.4]{KnuthBook} in Computer Science and \cite[Section~23.2.4]{Higham} in Numerical Analysis, with numerous additional references therein), the same cannot be said of Frobenius inversion --- we combed through the research literature and found only six references,  all from the 1970s or earlier, which we will review in Section~\ref{sec:prev}.

Our goal is to vastly extend and thoroughly analyze Frobenius inversion from a modern perspective.  We will extend it to the general case where only $A + iB$ is invertible but neither $A$ nor $B$ is (Section~\ref{sec:rand}), and to the important special case where $A + iB$ is Hermitian positive definite, in a way that exploits the symmetric positive definiteness of $A$ and  $A + BA^{-1}B$ (Section~\ref{sec:herm}).  We will show (Section~\ref{sec:lin}) that it is easy to find complex matrices $A + iB$ with
\begin{equation}\label{eq:cond}
\max\bigl( \kappa_2(A), \kappa_2(B), \kappa_2 (A + BA^{-1}B) \bigr) \ll \kappa_2(A + iB),
\end{equation}
where the gap between the left- and right-hand side is arbitrarily large, i.e., $A + iB$ can be arbitrarily ill-conditioned even when $A, B, A + BA^{-1}B$ are all well-conditioned --- a scenario bespoke for \eqref{eq:Finv}.

Frobenius inversion obviously extends to any quadratic fields of the form $\Bbbk[\sqrt{a}]$, i.e., $x^2 + a$ is irreducible over $\Bbbk$, but we will further extend it to any arbitrary quadratic field, and any iterated quadratic extensions including constructible numbers, multiquadratics, and towers of root extensions (Section~\ref{sec:tower}). In fact we show that for iterated quadratic extensions, Frobenius inversion essentially gives the multidimensional fast Fourier transform. We will prove that over any quadratic field Frobenius inversion is optimal in that it requires the least number of matrix multiplications and inversions over its base field (Sections~\ref{sec:Frob inv}, \ref{sec:herm}, and \ref{sec:rand}).

For complex matrix inversion, we show that  \textsc{Matlab}'s built-in inversion algorithm, i.e., directly inverting a matrix with LU or Cholesky decomposition \emph{in complex arithmetic}, is slower than applying Frobenius inversion with LU or Cholesky decomposition \emph{in real arithmetic} (Theorem~\ref{thm:threshold}, Propositions~\ref{prop:threshold} and \ref{prop:threshold2}). More importantly, we provide a series of numerical experiments in Section~\ref{sec:expr} to show that Frobenius inversion is indeed faster than \textsc{Matlab}'s built-in inversion algorithm in almost every situation and, despite well-known exhortations to avoid matrix inversion, suffers from no significant loss in accuracy. In fact methods based on Frobenius inversion may  be more accurate than standard methods in certain scenarios (Section~\ref{sec:linexp}).

\subsection{Why not invert matrices}\label{sec:no}

Matrix inversion is frown upon in numerical linear algebra, likely an important cause for the lack of interest in algorithms like Frobenius inversion. The usual reason for eschewing inversion \cite{HighamBook} is that in solving an $n \times n$ nonsingular system $Ax = b$, if we compute a solution $\widehat{x}_{\inv}$ by inverting $A$ through LU factorization $PA = LU$ and multiplying $A^{-1}$  to $b$, and if we compute a solution $\widehat{x}_{LU}$ directly through the LU factors with backward substitutions $Ly = Pb$, $Ux = y$, the latter approach is both faster, with $2n^3$ flops for $\widehat{x}_{\inv}$ versus $2n^3/3$ for $\widehat{x}_{LU}$, and more accurate, with backward errors
\begin{equation}\label{eq:compare}
   \lvert b - A\widehat{x}_{\inv} \rvert \leq n \lvert A\rvert \lvert A^{-1}\rvert \lvert b\rvert \mathsf{u} + O(\mathsf{u}^2) \quad\text{versus}\quad
   \lvert b - A\widehat{x}_{LU} \rvert \leq 3n \lvert \widehat{L}\rvert \lvert \widehat{U}\rvert \lvert \widehat{x}_{LU}\rvert \mathsf{u} + O(\mathsf{u}^2).
\end{equation}
Here $\mathsf{u}$ denotes  unit roundoff and  where $|\cdot|$ and $\le$ applies componentwise. As noted in  \cite{HighamBook}, usually $\lVert |\widehat{L}||\widehat{U}| \rVert_{\infty} \approx \lVert A \rVert_{\infty}$ and so $\widehat{x}_{LU}$ is likely more accurate than $\widehat{x}_{\inv}$ when $\lVert x \rVert_{\infty} \ll \lVert |A^{-1}||b| \rVert_{\infty}$.

Another common rationale for avoiding inversion is the old wisdom that many tasks that appear to require inversion actually do not --- an explicit inverse matrix $A^{-1} \in \mathbb{C}^{n \times n}$ is almost never required because upon careful examination, one would invariably realize that the same objective could be accomplished with a vector like $A^{-1}b$ or $\diag(A^{-1}) \in \mathbb{C}^n$ or a scalar like $c^\tp A^{-1} b$, $\lVert A^{-1} \rVert$, or $\tr(A^{-1}) \in \mathbb{C}$. These vectors and scalars could be computed with a matrix factorization or approximated to arbitrary accuracy with iterative methods \cite{golub}, which are often more amenable to updating/downdating \cite{GGMS} or better suited for preserving structures like sparsity.

\subsection*{Caveat} We  emphasize that Frobenius inversion, when applied to solve a system of complex linear equations $(A + iB)z = c + id$, will \emph{not} involve actually computing an explicit inverse matrix $(A + iB)^{-1}$ and then multiplying it to the vector  $c + id$. In other words, we do not use the expression in \eqref{eq:Finv} literally but only apply it in conjunction with various LU decompositions and back substitutions over $\mathbb{R}$; the matrix $(A + iB)^{-1}$ is never explicitly formed. The details are given in Section~\ref{sec:lin} alongside discussions of circumstances like \eqref{eq:cond} where the use of Frobenius inversion gives more accurate results than standard methods, with numerical evidence in Section~\ref{sec:linexp}.

\subsection{Why invert matrices}\label{sec:yes}

 We do not dispute the reasons in Section~\ref{sec:no} but numerical linear algebra is a field that benefits from a wide variety of different methods for the same task, each suitable for a different regime. There is no single method that is universally best in every instance. Even the normal equation, frown upon in numerical linear algebra like matrix inversion, can be the ideal method for certain least squares problems.

In fact, if we examine the  advantages of computing $\widehat{x}_{LU}$ over  $\widehat{x}_{\inv}$ in Section~\ref{sec:no}  more closely, we will find that the conclusion is not so clear cut. Firstly, the comparison in \eqref{eq:compare} assumes that accuracy is quantified by backward error $\lvert b - A\widehat{x} \rvert $ but in reality it is the forward error $\lvert x - \widehat{x} \rvert $ that is far more important and investigations in \cite{seqlinear,invforwarderr}, both analytical and experimental, show that the forward errors of $\widehat{x}_{LU}$ and  $\widehat{x}_{\inv}$ are similar. Secondly, if instead of solving a single linear system $Ax = b$, we have $p$ right-hand sides $b_1,\dots,b_p\in \mathbb{C}^n$, then it becomes $AX = B$ where $B =[b_1,\dots,b_p] \in \mathbb{C}^{n \times p}$ and we seek a solution $X \in \mathbb{C}^{n \times p}$. In this case the speed advantage of computing $\widehat{X}_{LU}$ over $\widehat{X}_{\inv}$  disappears when  $p =O(n)$: Note that the earlier flop count $2n^3/3$ for $\widehat{x}_{LU}$ ignores the cost of two backsubstitutions but when there are $2p$ backsubstitutions, these may no longer be ignored and are in fact dominant, making the cost of computing $\widehat{X}_{\inv}$ and $\widehat{X}_{LU}$ comparable. In \cite{seqlinear}, it is shown that because of data structure complications, computing $\widehat{X}_{\inv}$ can be significantly faster than  $\widehat{X}_{LU}$.

Moreover, the old wisdom that one may avoid computing explicit inverse matrices, while largely true, is not always true. There are situations, some of them alluded to in \cite[p.~260]{HighamBook}, where computing an explicit inverse matrix is inevitable or favorable:
\begin{description}[font=\normalfont, leftmargin=1.5\parindent]
\item[\textsc{Mimo radios}] In such radios, explicit inverse matrices are implemented  in hardware \cite{invforwarderr,hardware1,hardware2}. It is straightforward to hardwire or hardcode an explicit inverse matrix but considerably more difficult to do so in the form of ``LU factors with permutations and backsubstitutions,'' which can require more gates or code space and is more prone to implementation errors.

\item[\textsc{Superconductivity}] In the so-called KKR CPA algorithm \cite{heath1991early}, one needs to integrate the KKR inverse matrix over the first Brillouin zone, necessitating an explicit inverse matrix.

\item[\textsc{Linear modeling}] The inverse of a matrix often reveals important statistical properties that could only be discerned when one has access to the full explicit inverse \cite{maindonald1984statistical, mccullagh1989generalized}, i.e., we do not know which entries of $A^{-1}$ matter until we see all of them. For a specific example, take the ubiquitous model $y = X \widehat{\beta} + \varepsilon$ with design matrix $X$ and observed values $y_1,\dots, y_n$ of the dependent variable $y$  \cite{mccullagh1989generalized}, we understand the regression  coefficients $\widehat{\beta}$ through the values its covariance matrix $\Sigma \coloneqq \sigma^2 \cdot (X^\tp X)^{-1}$ where $\sigma^2$ is the variance of the dependent variable \cite{mccullagh1989generalized}. To see which values are large (positively correlated), small (negatively correlated), or nearly zero (uncorrelated) in relation to other values, we need access to all values of $\Sigma$.

\item[\textsc{Statistics}] For an unbiased estimator $\widehat{\theta}(X)$ of a parameter $\theta$, its Cramer--Rao lower bound is the inverse of its Fisher information matrix $I(\theta)$. This is an important quantity that gives a lower bound for the covariance matrix \cite{cramer_rao_1, cramer_rao_2} in the sense of  $\operatorname{cov}_{\theta}\bigl(\widehat{\theta}(X)\bigr)
\succeq  I(\theta)^{-1}$ where $\succeq$ is the Loewner order. In some Gaussian processes, this lower bound could be attained \cite{cramer_rao}. We need the explicit matrix inverse $I(\theta)^{-1}$ to understand the limits of certain statistical problems and to design optimal estimators that attain the Cramer--Rao lower bound.

\item[\textsc{Graph theory}] The inverses of the adjacency matrix, forward adjacency matrix, and various graph Laplacians of a graph $G$ contain important combinatorial properties about $G$ \cite{graph_ref1,graph_ref4,graph_ref2,graph_ref3} that are only revealed when one examines all entries of their explicit inverse matrices.

\item[\textsc{Symbolic computing}] Matrix inversions do not just arise in numerical computing with floating point operations. They are routinely performed in finite field arithmetic over a base field of the form $\Bbbk = \GF(p^n)$ in cryptography \cite{Hoffstein,Stinson}, combinatorics \cite{Krattenthaler96}, information theory \cite{AL69}, and finite field matrix computations \cite{SR22}. They are also carried out in rational arithmetic over transcendental fields \cite{Eberly97,EGGSV07,Giesbrecht95}, e.g., with a base field of the form $\Bbbk = \mathbb{Q}(x_1,\dots,x_n, e^{x_1},\dots,e^{x_n})$ and an extension field of the form $\mathbb{F} = \mathbb{Q}[i](x_1,\dots,x_n, e^{x_1},\dots,e^{x_n})$, or with finite fields in place of $\mathbb{Q}$ and $\mathbb{Q}[i]$. With such exact arithmetic, the considerations in Section~\ref{sec:no} become irrelevant.
\end{description}
In summary, the Frobenius inversion algorithms in this article are useful  (i) for problems with well-conditioned $A$, $B$, and $A + BA^{-1}B$ but  ill-conditioned $A + iB$; (ii) in situations requiring an explicit inverse matrix; (iii) to applications involving exact  finite field or rational arithmetic.

\subsection{Previous works}\label{sec:prev}

We review existing works that mentioned the inversion formula \eqref{eq:Finv} in the research literature: \cite{inversion_ref3,1100887,inversion_ref6,inversion_ref7,inversion_ref8,inversion_ref9} --- we note that this is an exhaustive list, and all predate 1979. We also widened our search to books and the education literature, and found \cite{inversion_ref2,inversion_ref5} in engineering education publications,  \cite[pp.~218--219]{inversion_ref1}, \cite[Exercise~14.8]{Higham}, and \cite[Chapter~II, Section~20]{inversion_ref4}, although they contain no new material. 

The algorithm,  according to \cite{inversion_ref8},  was first discovered by Frobenius and Schur although we are unable to find a published record in their Collected Works \cite{Frob, Schur}.  Since  ``Schur inversion'' is already used to mean something unconnected to complex matrices, and calling \eqref{eq:Finv} ``Frobenius--Schur inversion'' might lead to unintended confusion with Schur inversion, it seems befitting to name \eqref{eq:Finv} after Frobenius alone.

The discussions in \cite{inversion_ref3,inversion_ref6,inversion_ref9} are all about deriving Frobenius inversion.
From a modern perspective, the key to these derivations is an embedding of $\mathbb{C}^{n \times n}$ into $\mathbb{R}^{2n \times 2n}$ as a subalgebra via
\[
A + iB \mapsto \begin{bmatrix}
A & -B \\
B & A
\end{bmatrix}  \eqqcolon M,
\]
and noting that if $A$ is invertible, then $(A+iB)^{-1}$ corresponds to $M^{-1}$, given by the standard expression
\[
M^{-1} = \begin{bmatrix}
A^{-1} - A^{-1}B (M/A)^{-1} BA^{-1} & A^{-1}B(M/A)^{-1} \\ 
-(M/A)^{-1} B A^{-1} & (M/A)^{-1}
\end{bmatrix},
\]
where $M/A \coloneqq A + BA^{-1}B$ denotes the Schur complement of $A$ in $M$. The two right blocks of $M^{-1}$ then yield the expression 
\[
(A + iB)^{-1} = (M/A)^{-1} - i A^{-1}B(M/A)^{-1},
\]
which is  \eqref{eq:Finv}. The works in \cite{inversion_ref4,inversion_ref7} go further in addressing the case when both $A$ and $B$ are singular  \cite{inversion_ref4} and the case when $A$, $B$, $A+B$ or $A-B$ are all singular \cite{inversion_ref7}. However, they require the inversion of a $2n \times 2n $ real matrix, wiping out any computational savings that Frobenius inversion affords. The works \cite{1100887,inversion_ref9} avoided this pitfall but still compromised the computational savings of Frobenius inversion.  Our method in Section~\ref{sec:rand} will cover these cases and more, all while preserving the computational complexity of Frobenius inversion.

\subsection{Notations and conventions}

Fields are denoted in blackboard bold fonts. We write
\begin{align*}
\GL_n(\mathbb{F}) &\coloneqq \{X \in \mathbb{F}^{n \times n}: \det(X) \ne 0\}, \\
\O_n(\mathbb{R}) &\coloneqq \{X \in  \mathbb{R}^{n \times n} : X^\tp X = I \}, \\
\U_n(\mathbb{C}) &\coloneqq \{X \in  \mathbb{C}^{n \times n} : X^\ha  X = I \}
\end{align*}
for the general linear group of invertible matrices over any field $\mathbb{F}$, the orthogonal group over $\mathbb{R}$, and the unitary group over $\mathbb{C}$ respectively. Note that we have written $X^\tp$ for the transpose and $X^\ha$ for conjugate transpose for any $X \in \mathbb{C}^{m \times n}$. Clearly,  $X^\ha = X^\tp$ if $X \in \mathbb{R}^{m \times n}$.  We will also adopt the convention that $X^{-\tp} \coloneqq (X^{-1})^\tp = (X^\tp)^{-1}$ and $X^{-\ha} \coloneqq (X^{-1})^\ha = (X^\ha)^{-1}$ for any $X \in \GL_n(\mathbb{C})$. Clearly, $X^{-\ha} = X^{-\tp}$ if $X \in \GL_n(\mathbb{R})$.

For $\mathbb{F} = \mathbb{R}$ or $\mathbb{C}$, we write $\lVert X \rVert \coloneqq \sigma_1(X)$ for the spectral norm of $X \in \mathbb{F}^{m \times n}$ and $\kappa (X) \coloneqq \sigma_1(X)/\sigma_n(X)$ for the spectral condition number of $X \in \GL_n(\mathbb{F})$. When we speak of norm or condition number in this article, it will always be the spectral norm or spectral condition number, the only exception is the max norm defined and used in Section~\ref{sec:expr}.

\section{Frobenius inversion in exact arithmetic}\label{sec:symb}

We will first show that Frobenius inversion works over any quadratic field extension, with $\mathbb{C}$ over $\mathbb{R}$ a special case. More importantly, we will show that  Frobenius inversion is optimal over any quadratic field extension in that it requires a minimal number of matrix multiplications, inversions, and additions (Theorem~\ref{thm:optimal inverse}).

The reason for the generality in this section is to show that Frobenius inversion can be useful beyond numerical analysis, applying to matrix inversions in computational number theory \cite{Cohen1,Cohen2}, computer algebra \cite{Mignotte,Mishra}, cryptography \cite{Hoffstein,Stinson}, and finite fields \cite{McEliece,Menezes}  as well. This section covers the symbolic computing aspects of Frobenius inversion, i.e., in exact arithmetic. Issues related to  the numerical computing aspects, i.e., in floating-point arithmetic, including conditioning, positive definiteness, etc, will be treated in Sections~\ref{sec:lin}--\ref{sec:expr}.

Recall that a field  $\mathbb{F}$ is said to be a \emph{field extension} of another field $\Bbbk$ if $\Bbbk \subseteq \mathbb{F}$. In this case, $\mathbb{F}$ is automatically   a $\Bbbk$-vector space. The dimension of $\mathbb{F}$ as a $\Bbbk$-vector space is called the \emph{degree} of $\mathbb{F}$ over $\Bbbk$ and denoted   $[\mathbb{F} : \Bbbk]$ \cite{Roman}. A degree-two extension is also called a \emph{quadratic extension} and they are among the most important field extensions. For example, in number theory, two of the biggest achievements in the last decade were the generalizations of Andrew Wiles' celebrated work to real quadratic fields \cite{modularityR} and imaginary quadratic fields \cite{modularityI}. Let $\mathbb{F}$ be a quadratic extension of $\Bbbk$. Then it follows from standard field theory \cite{Roman} that there exists some monic irreducible quadratic polynomial $f \in \Bbbk[x]$ such that 
\[
\mathbb{F} \simeq \Bbbk[x] \!\bigm/\!\! \langle f\rangle,
\]
where $\langle f\rangle$ denotes the principal ideal generated by $f$ and $\Bbbk[x]/\langle f\rangle$ the quotient ring.
Let $f(x) = x^2 + \beta x + \tau$ for some $\beta,\tau \in \Bbbk$. Then, up to an isomorphism, $f$ may be written in a normal form:   
\begin{itemize}
\item $\operatorname{char}(\Bbbk) \ne 2$: $\beta = 0$ and $-\tau$ is not a complete square in $\Bbbk$;
\item $\operatorname{char}(\Bbbk) = 2$: either $\beta = 0$ and $-\tau$ is not a complete square in $\Bbbk$, or $\beta = 1$ and $x^2 + x + \tau$ has no solution in $\Bbbk$. 
\end{itemize}

\subsection{Gauss multiplication over quadratic field extensions}\label{sec:gauss}

Let $\xi$ be a root of $f(x)$ in an algebraic closure $\overline{\Bbbk}$. Then $\mathbb{F} \simeq \Bbbk[\xi]$, i.e., any element in $\mathbb{F}$ can be written uniquely as $a_1 + a_2 \xi$ with $a_1,a_2 \in \Bbbk$. Henceforth we will assume that $\mathbb{F} = \Bbbk[\xi]$. The product of two elements $a_1 + a_2 \xi$, $b_1 + b_2 \xi \in \Bbbk[\xi]$ is given by
\begin{equation}\label{eq:qmult}
(a_1 + a_2 \xi) (b_1 + b_2 \xi) = 
\begin{cases}
(a_1b_1 - \tau a_2b_2)+ (a_1 b_2 + a_2b_1) \xi &\text{if } f(x) = x^2 + \tau, \\
(a_1b_1 - \tau a_2b_2)+ (a_1 b_2 + a_2b_1 - a_2b_2) \xi &\text{if } f(x) = x^2 + x + \tau.
\end{cases}
\end{equation}
The following result is well-known for $\mathbb{C} = \mathbb{R}[i]$ but we are unable to find a reference for an arbitrary quadratic extension $\Bbbk[\xi]$.
\begin{proposition}[Complexity of multiplication in quadratic extensions]\label{prop:mult}
Let $\Bbbk, f,\tau,\xi$ be as above. Then there exists an algorithm for multiplication in $\mathbb{F} = \Bbbk[\xi]$ that costs three multiplications in $\Bbbk$. Moreover, such an algorithm is optimal in the sense of bilinear complexity, i.e., it requires a minimal number of multiplications in $\Bbbk$.
\end{proposition}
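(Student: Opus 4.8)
The plan is to prove the two assertions of Proposition~\ref{prop:mult} separately: first exhibit an explicit algorithm using three multiplications in $\Bbbk$, then establish the matching lower bound via bilinear complexity theory.

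For the upper bound, I would start from the two product formulas of Lemma~\ref{lem:multiplicaiton quadratic field extension} and apply a Karatsuba-type trick. In the case $f(x) = x^2 + \tau$, the naive evaluation of $(x_1y_1 - \tau x_2 y_2) + (x_1y_2 + x_2y_1)\xi$ uses four products $x_1y_1$, $x_2y_2$, $x_1y_2$, $x_2y_1$. I would instead compute the three products $p_1 = x_1 y_1$, $p_2 = x_2 y_2$, and $p_3 = (x_1 + x_2)(y_1 + y_2)$; then the real part is $p_1 - \tau p_2$ and the $\xi$-coefficient is $p_3 - p_1 - p_2$, all obtained by additions and by scaling by the fixed constant $\tau \in \Bbbk$ (scalar multiplications by constants are free in the bilinear model). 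For $f(x) = x^2 + x + \tau$ the same three products $p_1, p_2, p_3$ work: the real part is again $p_1 - \tau p_2$ (here using $c = \tau$, correcting the apparent typo in the statement) and the $\xi$-coefficient is $p_3 - p_1 - p_2 - p_2 = p_3 - p_1 - 2p_2$, or when $\operatorname{char}(\Bbbk) = 2$ simply $p_3 - p_1$. This verifies the first claim.

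For the lower bound, I would phrase multiplication in $\Bbbk[\xi]$ as a bilinear map $\Bbbk^2 \times \Bbbk^2 \to \Bbbk^2$, equivalently a tensor $T \in \Bbbk^2 \otimes \Bbbk^2 \otimes \Bbbk^2$, and show its tensor rank is exactly $3$. Rank $\le 3$ follows from the algorithm just given. For rank $\ge 3$, the key point is that $\Bbbk[\xi]$ is a \emph{field}: a classical argument (going back to Winograd, and to de Groote/Fiduccia--Zalcstein for this exact setting) shows that a bilinear algorithm for multiplication in a $d$-dimensional associative algebra $\mathcal{A}$ over $\Bbbk$ that uses fewer than $2d-1$ essential multiplications forces $\mathcal{A}$ to have a nonzero nilpotent, hence cannot be a field. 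Concretely, if the tensor had rank $2$, one of the two rank-one terms, say $\ell(x)\cdot m(y)\cdot v$ with $\ell, m$ linear functionals on $\Bbbk^2$ and $v \in \Bbbk^2$, would be ``inessential'' in a way that lets us specialize $x$ to a nonzero vector in $\ker \ell$; the induced map $y \mapsto xy$ would then have rank $\le 1$ on $\Bbbk^2$, contradicting that multiplication by a nonzero field element is invertible. I would write this out as: pick $0 \ne x_0 \in \ker \ell_1$ (where $\ell_1$ is the $x$-functional of the first rank-one term); then $x_0 \cdot y$ depends on $y$ only through $m_2(y)$, so the linear map $L_{x_0}\colon y \mapsto x_0 y$ has rank $\le 1$; but $L_{x_0}$ is multiplication by the nonzero element $x_0 \in \mathbb{F}$, hence bijective on the $2$-dimensional space $\mathbb{F}$ — contradiction. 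Therefore the rank is at least $3$, and combined with the algorithm, exactly $3$.

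The main obstacle is making the lower-bound argument fully rigorous over an \emph{arbitrary} field $\Bbbk$, including $\operatorname{char}(\Bbbk) = 2$ and possibly infinite or imperfect fields: one must be careful that ``essential multiplication'' is defined correctly (the functionals $\ell_j$ and $m_j$ span the dual spaces, after discarding degenerate terms) and that the specialization $x_0 \in \ker \ell_1 \setminus \{0\}$ exists, which is automatic since $\ell_1$ is a single linear functional on a $2$-dimensional space. A secondary, purely cosmetic issue is reconciling the constant named $c$ in the proposition statement with the constant $\tau$ appearing in Lemma~\ref{lem:multiplicaiton quadratic field extension}; I would simply note $c = \tau$ throughout. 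Everything else — the Karatsuba identities and the verification that the claimed formulas reproduce \eqref{eq:multiplication} — is a routine computation that I would state without belaboring.
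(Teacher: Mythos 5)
Your proof is correct, but it departs from the paper's argument in both halves. For the upper bound, you use the symmetric Karatsuba products $p_1 = x_1y_1$, $p_2 = x_2y_2$, $p_3 = (x_1+x_2)(y_1+y_2)$ in both cases and recover the outputs as $p_1 - \tau p_2$ and $p_3 - p_1 - p_2$ (or $p_3 - p_1 - 2p_2$). The paper instead uses the asymmetric decomposition $M_1 = (x_1-x_2)(y_1+\tau y_2)$, $M_2 = x_1y_2$, $M_3 = x_2y_1$ when $f = x^2 + \tau$, and $M_1 = x_1y_1$, $M_2 = x_2y_2$, $M_3 = (x_1-x_2)(y_1-y_2)$ when $f = x^2 + x + \tau$; this difference is cosmetic, as both are valid rank-$3$ decompositions. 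The real divergence is in the lower bound. The paper's proof is a hands-on coordinate computation: it assumes a two-product bilinear algorithm, observes that the two products must span the same $2$-dimensional space as the two output bilinear forms, writes each product as $a(x_1y_1 - \tau x_2y_2) + b(x_1y_2 + x_2y_1 - \delta x_2y_2)$, imposes the rank-one condition on the corresponding $2\times 2$ coefficient matrix to get $a(-\tau a - \delta b) = b^2$, and then reads off a contradiction with the irreducibility of $f$ in each normal form ($-\tau$ not a square, or $x^2 + x + \tau$ with no root). Your argument instead invokes the structural fact that left multiplication by a nonzero element of a field is bijective, so a rank-$2$ tensor decomposition would force some $L_{x_0}$ with $x_0 \ne 0$ to have rank $\le 1$, a contradiction. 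This is the Winograd/Fiduccia--Zalcstein style argument; it is shorter, uniform in the two normal forms of $f$, and uses the irreducibility of $f$ only indirectly through ``$\Bbbk[\xi]$ is a field,'' whereas the paper's computation uses irreducibility concretely and case-by-case. One small caution: your parenthetical appeal to the $2d-1$ Alder--Strassen type bound is stronger than what you actually prove (your substitution argument directly yields only $\ge d+1$, which happens to coincide with $2d-1$ when $d=2$), and your remark that the $\ell_j$ ``span the dual space'' is more than is needed — you only require each $\ell_j$ to be nonzero, which is automatic in a minimal rank decomposition. Neither affects correctness here.
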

\begin{proof}
\underline{Case I:} $f(x) = x^2 + \tau$. The product in \eqref{eq:qmult} can be computed with three $\Bbbk$-multiplications $m_1 = (a_1 - a_2)(b_1 + \tau b_2 )$, $m_2 = a_1 b_2$, $m_3 = a_2b_1$, since 
\begin{equation}\label{eq:prop:mult:case1}
a_1 b_1 - \tau a_2 b_2 = m_1 - \tau m_2 + m_3, \quad a_1b_2 + a_2b_1 = m_2 + m_3.
\end{equation}
\underline{Case II:}   $f(x) = x^2 + x + \tau$. The product in \eqref{eq:qmult} can be computed with three $\Bbbk$-multiplications $m_1 = a_1b_1$, $m_2 = a_2b_2$, $m_3 = (a_1- a_2)(b_1 -b_2)$, since
\begin{equation}\label{eq:prop:mult:case2}
a_1 b_1 - \tau a_2 b_2 = m_1 - \tau m_2,\quad a_1b_2 + a_2b_1 - a_2b_2 = m_1 - m_3. 
\end{equation}
To show optimality in both cases suppose there is an algorithm for computing \eqref{eq:qmult} with two $\Bbbk$-multiplications $m_1'$ and $m_2'$. Then
\[
a_1b_1 - \tau a_2 b_2, \; a_1b_2 + a_2b_1 - \delta a_2 b_2 \in \spn\{m'_1, m'_2\},
\]
where $\delta = 0$ in Case~I and $\delta = 1$ in Case~II. Clearly $a_1b_1 - \tau a_2 b_2$ and $a_1b_2 + a_2b_1 - \delta a_2 b_2$ are not collinear; thus 
\[
m'_1, m'_2  \in \spn\{a_1b_1 - \tau a_2 b_2, a_1b_2 + a_2b_1 - \delta a_2 b_2\}
\]
and so there exist $p,q,r,s\in \Bbbk$,  $p s - q r \ne 0$, such that 
\begin{align*}
m'_1 &= p (a_1b_1 - \tau a_2 b_2) + q(a_1b_2 + a_2b_1 - \delta a_2 b_2) = p a_1 b_1 + q a_1b_2 + q a_2b_1 +(-\tau p -\delta q) a_2b_2, \\ 
m'_2 &= r (a_1b_1 - \tau a_2 b_2) + s (a_1b_2 + a_2b_1 - \delta a_2 b_2) = r a_1 b_1 + s a_1b_2 + s a_2b_1 +(-\tau r -\delta s) a_2b_2.
\end{align*}
As $p s - q r \ne 0$, at least one of $p,q,r,s$ is nonzero. Since $m_1'$ is a $\Bbbk$-multiplication, we must have $m_1' = (\lambda_1 a_1 + \lambda_2 a_2)(\mu_1 b_1 + \mu_2 b_2)$ for some $\lambda_1 a_1 + \lambda_2 a_2$, $\mu_1 b_1 + \mu_2 b_2 \in \Bbbk$. Therefore
\[
p(-\tau p - \delta q) = q^2, \qquad
r(-\tau r - \delta s) = s^2.
\]
For Case~I, the left equation reduces to $\tau p^2 + q^2 = 0$ and thus $p = q = 0$ as $-\tau$ is not a complete square in $\Bbbk$; likewise, the right equation gives $r = s = 0$, a contradiction as $p,q,r,s$ cannot be all zero. 
For Case~II,  the left equation reduces to $\tau p^2 + p q +  q^2 = 0$. We must have $p \ne 0$ or else $q = 0$ will contradict $p s -q r \ne 0$; but if so, substituting $q' = q/p$ gives ${q'}^2 + q' + \tau=0$, contradicting the assumption that $x^2 + x + \tau = 0$ has no solution in $\Bbbk$. 
\end{proof}
For the special case when $\Bbbk = \mathbb{R}$ and $f(x) = x^2 + 1$, we have $\xi = i$ and $\mathbb{F} = \Bbbk[\xi] = \mathbb{C}$ and the algorithm in \eqref{eq:prop:mult:case1} is the celebrated Gauss  multiplication of complex numbers, $(a_1 + i a_2)(b_1+ i b_2) =  (a_1 b_1 - a_2 b_2) + i[(a_1+a_2)(b_1+b_2) -a_1 b_1 -a_2 b_2]$,  whose optimality is proved in \cite{Munro71,winograd70}.  Proposition~\ref{prop:mult} may be viewed as a generalization of Gauss multiplication to arbitrary quadratic extensions.

In the language of tensors \cite[Example~3.8]{acta}, multiplication in $\Bbbk[\xi]$ is a bilinear map over $\Bbbk$,
\[
m: \Bbbk[\xi] \times \Bbbk[\xi] \to \Bbbk[\xi], \quad (a_1 + a_2 \xi, b_1 + b_2 \xi) \mapsto (a_1 + a_2 \xi) (b_1 + b_2 \xi),
\]
and therefore corresponds to a tensor in $\mu \in \Bbbk[\xi] \otimes \Bbbk[\xi] \otimes \Bbbk[\xi]$.  An equivalent way to state Proposition~\ref{prop:mult} is that the tensor rank of $\mu$ is exactly three.

\subsection{Gauss matrix multiplication over quadratic field extensions}\label{sec:gauss mult}

We extend the multiplication algorithm in the previous section to matrices. Notations will be as in the last section.  Let $\mathbb{F}^{n \times n}$ be the $\mathbb{F}$-algebra of $n\times n$ matrices over $\mathbb{F}$.  Since $\mathbb{F} = \Bbbk[\xi]$, we have $\mathbb{F}^{n \times n} = \Bbbk^{n \times n} \otimes_{\Bbbk} \mathbb{F}$ \cite[p.~627]{acta}. Thus an element in $X \in \mathbb{F}^{n \times n}$ can be written as $X = A + \xi B$ where $A,B\in \Bbbk^{n \times n}$.

By following the argument in the proof of Proposition~\ref{prop:mult}, we obtain its analogue for matrix multiplication in $\mathbb{F}^{n \times n}$ via matrix multiplications in $\Bbbk^{n \times n}$.
\begin{proposition}[Gauss matrix multiplication]\label{prop:matrix mult}
Let $\Bbbk,\mathbb{F}, n,f,\tau, \xi$ be as before. Let  $X = A + \xi B$, $Y = C + \xi D \in \mathbb{F}^{n \times n}$ with $A,B,C,D\in \Bbbk^{n \times n}$. If $f(x) = x^2 + \tau$, then $XY$ can be computed via
\begin{equation}\label{eq:matrix mult-1}
\begin{aligned}
M_1 &= (A - B)(C + \tau D), &M_2 &= AD,  &M_3 &= BC; \\
N_1 &= M_1 - \tau M_2 + M_3,  &N_2 &= M_2 + M_3;  &XY &=N_1 + \xi N_2.
\end{aligned}
\end{equation}
If $f(x) = x^2 + x + \tau$, then $XY$ can be computed via
\begin{equation}\label{eq:matrix mult-2}
\begin{aligned}
M_1 &= AC,  &M_2 &= BD, &M_3 &= (A-B)(C-D);\\
N_1 &= M_1 - \tau M_2, &N_2 &= M_1 - M_3; &XY &= N_1 + \xi N_2. 
\end{aligned}
\end{equation}
The algorithms for forming $XY$ in \eqref{eq:matrix mult-1} and \eqref{eq:matrix mult-2} use a minimal number of matrix multiplications in $\Bbbk^{n \times n}$.
\end{proposition}
\begin{proof}
It is straightforward to check that \eqref{eq:matrix mult-1} and \eqref{eq:matrix mult-2} give $XY$. To see minimality, we repeat the proof of Proposition~\ref{prop:mult} noting that the argument depends only on $\mathbb{F}$ as a two-dimensional free $\Bbbk$-module, and that $\mathbb{F}^{n \times n}$ is also a two-dimensional free $\Bbbk^{n \times n}$-module.
\end{proof}

\subsection{Frobenius matrix inversion over quadratic field extensions}\label{sec:Frob inv}

Let $A + \xi B \in \GL_n(\mathbb{F})$ with $A,B \in \Bbbk^{n \times n}$. Then $(A + \xi B)^{-1} = C + \xi D$ if and only if 
\begin{equation}\label{eq:inverse equation}
(A + \xi B) (C + \xi D)  = I,
\end{equation} 
from which we may solve for  $C,D \in \Bbbk^{n \times n}$. As we saw in \eqref{eq:qmult}, multiplication in $\mathbb{F}$ and thus that in $\mathbb{F}^{n \times n}$ depends on the form of $f$. So we have to consider two cases corresponding to the two normal forms of $f$. 

\begin{lemma}\label{lemma:invertibility}
Let $\Bbbk,\mathbb{F}, n,f,\tau, \xi$ be as before. Let $A + \xi B\in \GL_n(\mathbb{F})$ with $A,B\in \Bbbk^{n \times n}$.
\begin{enumerate}[\normalfont(i)]
\item  If $f(x) = x^2 + \tau$, then $A + \tau B A^{-1} B \in \GL_n(\Bbbk)$ whenever $A\in \GL_n(\Bbbk)$. \label{lemma:invertibility:eq1}
\item If $f(x) = x^2 + x + \tau$, then $\tau B + A B^{-1} A - A \in \GL_n(\Bbbk)$ whenever $B \in \GL_n(\Bbbk)$. \label{lemma:invertibility:eq4} \label{lemma:invertibility:eq3}
\end{enumerate}
\end{lemma}
\begin{proof}
Consider the case $f(x) = x^2 + \tau$. By \eqref{eq:inverse equation},
$AC - \tau BD = I$ and $AD + BC = 0$. So $(A + \tau B A^{-1} B )C = I$.
Hence $A + \tau B A^{-1} B$ is invertible. A similar argument applies to the case $f(x) = x^2 + x+ \tau$ to yield \ref{lemma:invertibility:eq4}.
\end{proof}

Let the matrix addition, multiplication, and inversion maps over any field $\mathbb{F}$ be denoted respectively by
\begin{alignat*}{2}
\add_{n,\mathbb{F} }: \mathbb{F}^{n \times n} \times \mathbb{F}^{n \times n} &\to \mathbb{F}^{n \times n}, \qquad &\add_{n,\mathbb{F} }(X,Y) &= X+Y;\\
\mul_{n,\mathbb{F} }: \mathbb{F}^{n \times n} \times \mathbb{F}^{n \times n} &\to \mathbb{F}^{n \times n}, \qquad &\mul_{n,\mathbb{F} }(X,Y) &= XY;\\
\inv_{n,\mathbb{F}}: \GL_n(\mathbb{F}) &\to \GL_n(\mathbb{F}),  &\inv_{n,\mathbb{F}}(X) &= X^{-1}.
\end{alignat*}
We will now express $\inv_{n,\mathbb{F}}$ in terms of $\inv_{n,\Bbbk}$, $\mul_{n,\Bbbk}$, and $\add_{n,\Bbbk}$.

\begin{lemma}[Frobenius inversion over quadratic fields]\label{lem:inverse}
Let $\Bbbk,\mathbb{F}, n,f,\tau, \xi$ be as before. Let $X = A + \xi B\in \GL_n(\mathbb{F})$ with $A,B\in \Bbbk^{n \times n}$. If $f(x) = x^2 + \tau$ and $A \in \GL_n(\Bbbk)$, then
\begin{equation}\label{eq:lem:inverse}
    X^{-1} = (A + \tau BA^{-1}B)^{-1} - \xi A^{-1}B(A+ \tau BA^{-1} B)^{-1}.
\end{equation}
If $f(x) = x^2 + x + \tau$ and $B \in \GL_n(\Bbbk)$, then
\begin{equation}\label{eq:lem:inverse2}
X^{-1} = (B^{-1}A - I)(AB^{-1}A - A + \tau B)^{-1} - \xi (AB^{-1}A - A + \tau B)^{-1}
\end{equation}
\end{lemma}
\begin{proof}
\underline{Case I:} $f(x) = x^2 + \tau$. From \eqref{eq:inverse equation}, we get
\[
AC - \tau BD = I,\qquad AD + BC = 0.
\]
\underline{Case II:} $f(x) = x^2 + x + \tau$. From \eqref{eq:inverse equation}, we get
\[
AC  - \tau B D = I,\qquad A D + BC - BD = 0.
\]
In each case, solving the equations for $C$ and $D$ gives us the required expressions \eqref{eq:lem:inverse} and \eqref{eq:lem:inverse2}.
\end{proof}
We could derive alternative inversion formulas with other conditions on $A$ and $B$. For example, in the case $f(x) = x^2 + \tau$, instead of \eqref{eq:lem:inverse}, we could have
\[
X^{-1} = B^{-1}A(AB^{-1}A + \tau B)^{-1} - \xi (AB^{-1}A + \tau B)^{-1},
\]
conditional on $B \in \GL_n(\Bbbk)$; in the case $f(x) = x^2 + x + \tau$,  instead of \eqref{eq:lem:inverse2}, we could have
\[
X^{-1} = (A + \tau B (A-B)^{-1} B)^{-1} - \xi (A-B)^{-1} B (A + \tau B (A-B)^{-1} B)^{-1},
\]
conditional on $A-B \in \GL_n(\Bbbk)$. There is no single inversion formula that will work universally for all $A + \xi B \in \GL_n(\mathbb{F})$. Nevertheless, in each case, the inversion formula \eqref{eq:lem:inverse} or \eqref{eq:lem:inverse2} works almost everywhere except for matrices $A + \xi B$ with $\det(A) = 0$ or $\det(B) = 0$ respectively. In Section~\ref{sec:rand}, we will see how to alleviate this minor restriction algorithmically for complex matrices.

We claim that \eqref{eq:lem:inverse} and \eqref{eq:lem:inverse2} allow $\inv_{n,\mathbb{F}}$ to be evaluated by invoking $\inv_{n,\Bbbk}$ twice, $\mul_{n,\Bbbk}$ thrice, and $\add_{n,\Bbbk}$ once. To see this more clearly, we express them in pseudocode as Algorithms~\ref{alg:inverse} and \ref{alg:inverse2} respectively.
\begin{algorithm}[!htb]
\caption{Frobenius Inversion with $\xi$ a root of $ x^2 + \tau$}
\label{alg:inverse}
\begin{algorithmic}[1]
\Require $X = A + \xi B$ with $A \in \GL_n(\Bbbk)$
\State matrix invert $X_1 = A^{-1}$;
\State matrix multiply $X_2 = X_1B$;
\State matrix multiply $X_3 = B X_2$;
\State matrix add $X_4 = A + \tau X_3$;
\State matrix invert $X_5 =  X_4^{-1}$;
\State matrix multiply $X_6 = X_2 X_5$;
\Ensure inverse $X^{-1} = X_5 - \xi X_6$
\end{algorithmic}
\end{algorithm}

A few words are in order here. A numerical linear algebraist may balk at inverting $A$ and then multiplying it to $B$ to form $A^{-1}B$ instead of solving a linear system with multiple right-hand sides. However, Algorithms~\ref{alg:inverse} and \ref{alg:inverse2} should be viewed in the context of \emph{symbolic computing} over arbitrary fields. To establish complexity results like Theorem~\ref{thm:optimal inverse} and Theorem~\ref{thm:GFiter}, we would have to state the algorithms purely in terms of algebraic operations in $\Bbbk^{n \times n}$, i.e., $\inv_{n,\Bbbk}$, $\mul_{n,\Bbbk}$, and $\add_{n,\Bbbk}$.
The \emph{numerical computing} aspects specific to $\Bbbk = \mathbb{R}$ and $\mathbb{F}= \mathbb{C}$ will be deferred to Sections~\ref{sec:explicit}--\ref{sec:expr}, where, among other things, we would present several numerical computing variants of Algorithm~\ref{alg:inverse} (see Algorithms~\ref{alg:Frob:linear}, \ref{alg:inverse3}, \ref{alg:rand inverse}, \ref{alg:inverse4}). We also remind the reader that a term like $X_5 - \xi X_6$ in the output of these algorithms does not entail  matrix addition; here $\xi$ plays a purely symbolic role like the imaginary unit $i$, and $X_5$ and $-X_6$ are akin to the `real part' and `imaginary part.'

\begin{algorithm}[!htb]
\caption{Frobenius Inversion with $\xi$ a root of $ x^2 + x + \tau$}
\label{alg:inverse2}
\begin{algorithmic}[1]
\Require $X = A + \xi B$  with $B \in \GL_n(\Bbbk)$
\State matrix invert $X_1 = B^{-1}$;
\State matrix multiply $X_2 = X_1A -I$;
\State matrix multiply $X_3 = AX_2$;
\State matrix add $X_4 =  X_3 + \tau B$;
\State matrix invert $X_5 = X_4^{-1}$;
\State matrix multiply $X_6 = X_3 X_5$;
\Ensure inverse $X^{-1} = X_6 - \xi X_5$
\end{algorithmic}
\end{algorithm}

Note that the addition of a fixed constant (i.e., independent of inputs $A$ and $B$) matrix $-I$ in Step~2 of Algorithm~\ref{alg:inverse2} does not count towards the computational complexity of the algorithm \cite{algebraiccomplexity}.

As we mentioned earlier, $\mathbb{F}^{n \times n}$ is a $\Bbbk^{n \times n}$-bimodule. We prove next that Algorithms~\ref{alg:inverse}  and \ref{alg:inverse2} have optimal computational complexity in terms of matrix operations in $\Bbbk^{n \times n}$.
\begin{theorem}[Optimality of Frobenius Inversion]\label{thm:optimal inverse}
Algorithm~\ref{alg:inverse} and \ref{alg:inverse2} for $\inv_{n,\mathbb{F}}$ require the fewest number of matrix operations in $\Bbbk^{n \times n}$: two  $\inv_{n,\Bbbk}$, three $\mul_{n,\Bbbk}$, and one $\add_{n,\Bbbk}$, i.e., there is no algorithm for  matrix inversion in $\mathbb{F}^{n \times n}$ that takes four or fewer matrix operations in $\Bbbk^{n \times n}$.
\end{theorem}
\begin{proof}
If $n = 1$, then this reduces to Proposition~\ref{prop:mult}. So we will assume that $n \ge 2$. We will restrict ourselves to Algorithm~\ref{alg:inverse} as the argument for Algorithm~\ref{alg:inverse2} is nearly identical.

Clearly, we need at least one $\add_{n,\Bbbk}$ to compute $\inv_{n,\mathbb{F}}$ so  Algorithm~\ref{alg:inverse} is already optimal in this regard. We just need to restrict ourselves to the numbers of $\inv_{n,\Bbbk}$ and $\mul_{n,\Bbbk}$, which are invoked twice and thrice respectively in Algorithm~\ref{alg:inverse}. We will show that these numbers are minimal. In the following, we pick any $A,B \in \GL_n(\Bbbk)$ that do not commute.

First we claim that it is impossible to compute  $(A + \xi B)^{-1}$ with fewer than two $\inv_{n,\Bbbk}$ even with no limit on the number of $\add_{n,\Bbbk}$ and $\mul_{n,\Bbbk}$.  By \eqref{eq:lem:inverse},  $(A + \xi B)^{-1}$  comprises two $\Bbbk^{n \times n}$ matrices $(A + \tau BA^{-1}B)^{-1}$ and $A^{-1}B (A + \tau BA^{-1}B)^{-1}$, which we will call its `real part' and `imaginary part' respectively, slightly abusing terminologies.  We claim that computing the `real part' $(A + \tau BA^{-1}B)^{-1}$ alone already takes at least  two $\inv_{n,\Bbbk}$. 
If $(A+\tau BA^{-1}B)^{-1}$ can be computed with just one $\inv_{n,\Bbbk}$,  then $A(A+\tau BA^{-1}B)^{-1}$ can also be computed with just one $\inv_{n,\Bbbk}$ as the extra factor $A$ involves no inversion. However, if it takes only one $\inv_{n,\Bbbk}$, then we must have an expression
\[
A(A+\tau BA^{-1}B)^{-1} = f(A, B, g(A,B)^{-1})
\]
for some noncommutative polynomials $f \in \Bbbk \langle x,y,z \rangle$ and $g \in \Bbbk\langle x,y \rangle$. Now observe that 
\[
A (A+ \tau BA^{-1}B)^{-1}  = (I + \tau (B A^{-1})^2)^{-1}.
\]
To see that the last two expressions are contradictory, we write $X \coloneqq BA^{-1}$ and expand them in formal power series, thereby removing negative powers for an easier comparison:
\[
\sum_{k=0}^\infty (-\tau)^k X^{2k} =  (I + \tau X^2)^{-1} = f(A, XA, g(A,XA)^{-1}) = f\Bigl(A, XA, \sum_{k=0}^\infty \bigl( I - g(A,XA)\bigr)^k \Bigr).
\]
Note that the leftmost expression is purely in powers of $X$, but the rightmost expression must necessarily involve $A$ --- indeed any term involving a power of $X$ must involve $A$ to the same or higher power. The remaining possibility that $X$ is a power of $A$ is excluded since $A$ and $B$ do not commute. So we arrive at a contradiction. Hence $(A + \tau BA^{-1}B)^{-1}$ and therefore  $(A + \xi B)^{-1}$ requires at least two $\inv_{n,\Bbbk}$ to compute.

Next we claim that it is impossible to compute  $(A + \xi B)^{-1}$ with fewer than three $\mul_{n,\Bbbk}$ even with no limit on the number of  $\add_{n,\Bbbk}$ and $\inv_{n,\Bbbk}$. Let the `real part' and `imaginary part' be denoted
\[
Y \coloneqq (A + \tau BA^{-1}B)^{-1}, \qquad Z \coloneqq A^{-1}B (A + \tau BA^{-1}B)^{-1} = (B+ \tau AB^{-1}A)^{-1}.
\]
Observe that we may express $BA^{-1}B $ in terms of $Y$ and $AB^{-1}A $ in terms of $Z$ using only  $\add_{n,\Bbbk}$ and $\inv_{n,\Bbbk}$:
\[
BA^{-1}B =\tau^{-1} ( Y^{-1} - A ), \qquad  AB^{-1}A = \tau^{-1}(Z^{-1} - B).
\]
So computing both $BA^{-1}B$  and $A B^{-1}A$ take the same number of  $\mul_{n,\Bbbk}$ as computing both $Y$ and $Z$.  However, as $A$ and $B$ do not commute, it is impossible to compute both $BA^{-1}B$ and $AB^{-1}A$ with just two $\mul_{n,\Bbbk}$.  Consequently $(A + \xi B)^{-1} = Y + \xi Z$ requires at least three $\mul_{n,\Bbbk}$ to compute.
\end{proof}
A more formal way to cast our proof above would involve the notion of a \emph{straight-line program} \cite[Definition~4.2]{algebraiccomplexity}, but we prefer to avoid pedantry given that the ideas involved are the same.

\subsection{Frobenius inversion over iterated quadratic extensions}\label{sec:tower}

Repeated applications of Algorithms~\ref{alg:inverse} and \ref{alg:inverse2} allow us to extend Frobenius inversion to an \emph{iterated quadratic extension}: 
\begin{equation}\label{eq:tower}
\Bbbk \eqqcolon \mathbb{F}_0 \subsetneq \mathbb{F}_1 \subsetneq \cdots \subsetneq \mathbb{F}_m \coloneqq \mathbb{F},
\end{equation}
where $[\mathbb{F}_k : \mathbb{F}_{k-1}] = 2$, $k=1,\dots, m$. By our discussion at the beginning of Section~\ref{sec:symb},  $\mathbb{F}_k = \mathbb{F}_{k-1}[\xi_k]$ for some  $\xi_k\in \mathbb{F}_k$. Let $f_k \in \Bbbk[x]$  be the minimal polynomial \cite{Roman} of $\xi_k$. Then $f_k$ is a monic irreducible quadratic polynomial that we may assume is in normal form, i.e.,
\[
f_k(x) = x^2 + \tau_k\quad \text{or} \quad f_k(x) = x^2 + x + \tau_k,\quad k=1,\dots, m.
\]
Since $[\mathbb{F} : \Bbbk]  = \prod_{k=1}^m = [\mathbb{F}_k : \mathbb{F}_{k-1}] = 2^m$, any element in  $\mathbb{F}$ may be written as
\begin{equation}\label{eq:iterquad}
\sum_{\alpha \in \{0,1\}^{m}} c_\alpha \xi^{\alpha}
\end{equation}
in \emph{multi-index} notation with $\alpha= (\alpha_1,\dots, \alpha_{m})  \in \{0,1\}^m$, $\xi^\alpha \coloneqq \xi_1^{\alpha_1} \cdots \xi_{m}^{\alpha_{m}}$, and $c_\alpha \in \Bbbk$. Moreover, we may regard $\mathbb{F}$ as a quotient ring of a multivariate polynomial ring  or as a tensor product of $m$ quotient rings of univariate polynomial ring:
\begin{equation}\label{eq:K_m quotient}
\mathbb{F} \simeq \Bbbk[x_1,\dots, x_m]\!\bigm/\!\! \langle f_1,\dots, f_m \rangle = \bigotimes_{k=1}^m \bigl( \Bbbk[x] \!\bigm/\!\! \langle f_k \rangle \bigr).
\end{equation}
There are many important fields that are special cases of iterated quadratic extensions
\begin{example}[Constructible numbers]
One of the most famous instance is the special case $\Bbbk = \mathbb{Q}$ with the iterated quadratic extension $\mathbb{F} \subseteq \mathbb{R}$.  In which case the positive numbers in $\mathbb{F}$ are called \emph{constructible numbers} and they are precisely the lengths that can be constructed with a compass and a straightedge in a finite number of steps. The impossibillity of trisecting an angle, doubling a cube, squaring a circle, constructing $n$-sided regular polygons for $n =  7, 9, 11, 13, 14, 18, \dots$, etc, were all established using the notion of constructible numbers.
\end{example}

\begin{example}[Multiquadratic fields]
Another interesting example is $\mathbb{F} = \mathbb{Q}[\sqrt{q_1},\dots, \sqrt{q_m} ]$. It is shown in \cite{Besicovitch40} that 
\[
\mathbb{Q} \subsetneq \mathbb{Q}[\sqrt{q_1}] \subsetneq \mathbb{Q}[\sqrt{q_1},\sqrt{q_2}] \subsetneq \cdots \subsetneq \mathbb{Q}[\sqrt{q_1},\dots, \sqrt{q_m} ]
\]
is an iterated quadratic extension if the product of any nonempty subset of $\{\sqrt{q_1},\dots, \sqrt{q_m} \}$ is not in $\mathbb{Q}$. In this case, we have $\mathbb{F}_k = \mathbb{Q}[\sqrt{q}_1,\dots, \sqrt{q}_k]$ and $f_k(x) = x^2 - q_k$, $k = 1,\dots, m$.
\end{example}

\begin{example}[Tower of root extensions of non-square]
Yet another commonly occurring example \cite[Section~14.7]{DF04} of iterated quadratic extension is a `tower' of root extensions:
\[
\mathbb{Q} \subsetneq \mathbb{Q}[q^{1/2}] \subsetneq \mathbb{Q}[q^{1/4}] \subsetneq \cdots \subsetneq \mathbb{Q}[q^{1/2^m}]
\]
where $q\in \mathbb{Q}$ is not a complete square.
\end{example}

Since $\Bbbk^{n \times n}$ and $\mathbb{F}$ are both free $\Bbbk$-modules, we have  $\mathbb{F}^{n \times n} = \Bbbk^{n \times n} \otimes_{\Bbbk} \mathbb{F}$ as tensor product of $\Bbbk$-modules. Hence the expression \eqref{eq:iterquad} may be extended to matrices, i.e., any $X \in \mathbb{F}^{n \times n}$ may be written as 
\begin{equation}\label{eq:iterquadmat}
X = \sum_{\alpha \in \{0,1\}^m} C_\alpha \xi^\alpha
\end{equation}
with $C_\alpha \in \Bbbk^{n \times n}$, $\alpha  \in \{0,1\}^m$. Note that the $c_\alpha$ in \eqref{eq:iterquad} are scalars and the $C_\alpha$ in \eqref{eq:iterquadmat} are matrices. On the other hand, in an iterated quadratic extension \eqref{eq:tower}, each $\mathbb{F}_k$ is an $\mathbb{F}_{k-1}$-module, $k=1,\dots,m$, and thus we also have the tensor product relation
\[
\Bbbk^{n \times n} \otimes_{\Bbbk} \mathbb{F} = \Bbbk^{n \times n} 
\otimes_{\Bbbk} \mathbb{F}_1 \otimes_{\mathbb{F}_1} \mathbb{F}_2 \otimes_{\mathbb{F}_2}\cdots \otimes_{\mathbb{F}_{m-1}} \mathbb{F},
\]
recalling that $\mathbb{F}_0 \coloneqq \Bbbk$ and $\mathbb{F}_m \coloneqq \mathbb{F}$. Hence any $X\in \mathbb{F}^{n \times n}$ may also be expressed recursively as
\begin{equation}\label{eq:iterquadmat2}
\begin{aligned}
X &= A_0 + \xi_{m} A_1,\\
A_\beta &= A_{0,\beta} + \xi_{m-k} A_{1,\beta}, &\beta &\in \{0,1\}^{k}, &k&=1,\dots, m-1, 
\end{aligned}
\end{equation}
with $A_\beta \in \mathbb{F}^{n \times n}_{m - |\beta|}$. The relation between the two expressions \eqref{eq:iterquadmat} and \eqref{eq:iterquadmat2} is given as follows.
\begin{lemma}\label{lem:inductive matrix over Km-2}
Let $X \in \mathbb{F}^{n \times n}$ be expressed as in \eqref{eq:iterquadmat} with $C_\alpha \in \Bbbk^{n \times n}$, $\alpha \in \{0,1\}^{m}$, and as in \eqref{eq:iterquadmat2} with $A_{\beta}  \in \mathbb{F}^{n \times n}_{m - |\beta|}$, $\beta\in \{0,1\}^k$. Then for any $k \in \{ 1,\dots,m\}$,
\[
X = \sum_{\beta \in \{0,1\}^k} A_\beta \xi_{m-k+1}^{\beta_1} \cdots \xi_{m}^{\beta_k},
\] 
and for any $\beta\in \{0,1\}^k$,
\[
A_{\beta} = \sum_{\gamma\in \{0,1\}^{m-k} } C_{\gamma,\beta} \xi_1^{\gamma_1}\cdots \xi_{m-k}^{\gamma_{m-k}}.
\]
In particular, $C_\alpha = A_\alpha$.
\end{lemma}
\begin{proof}
We proceed by induction on $k$. Clearly the formula holds for $k = 1$ by \eqref{eq:iterquadmat2}. Assume that the first expression holds for $k = s$, i.e.,
\[
X = \sum_{\beta \in \{0,1\}^s } A_\beta \xi_{m-s+1}^{\beta_1} \cdots \xi_{m}^{\beta_s}.
\]
To show that it also holds for $k = s+1$, note that $A_\beta = A_{0,\beta} + \xi_{m - s}A_{1,\beta}$, so
\[
X = \sum_{\beta \in \{0,1\}^s } (A_{0,\beta} + \xi_{m - s}A_{1,\beta}) \xi_{m-s+1}^{\beta_1} \cdots \xi_{m}^{\beta_s} = \sum_{\gamma \in \{0,1\}^{s+1}} A_\gamma \xi_{m-s}^{\gamma_1} \cdots \xi_{m}^{\gamma_{s+1}}
\]
completing the induction. Comparing coefficients in \eqref{eq:iterquadmat} and \eqref{eq:iterquadmat2} yields the second expression.
\end{proof}
The representation in Lemma~\ref{lem:inductive matrix over Km-2}, when combined with Gauss multiplication, gives us a method for fast matrix multiplication  in $\mathbb{F}^{n \times n}$, and, when combined with Frobenius inversion, gives us a method for fast matrix inversion in $\mathbb{F}^{n \times n}$. 
\begin{theorem}[Gauss multiplication and Frobenius inversion over iterated quadratic extension]\label{thm:GFiter}
Let $\mathbb{F}$ be an iterated quadratic extension of $\Bbbk$ of degree $[\mathbb{F} : \Bbbk] = 2^m$. Then
\begin{enumerate}[\normalfont(i)]
\item one may multiply two matrices in $\mathbb{F}^{n \times n}$ with $3^m$ multiplications in $\Bbbk^{n \times n}$;
\item one may invert a generic matrix in $\mathbb{F}^{n \times n}$ with $3(3^m - 2^m)$ multiplications and $2^m$ inversions in $\Bbbk^{n \times n}$.
\end{enumerate}
If we write $N = 2^m$, this multiplication algorithm reduces the complexity of evaluating $\mul_{n,\mathbb{F}}$ from $O(N^2)$ to $O(N^{\log_2 3})$ $\mul_{n,\Bbbk}$.
\end{theorem}
\begin{proof}
Let $X,Y\in \mathbb{F}^{n \times n}$. By \eqref{eq:iterquadmat2}, we may write 
\[
X = A_0 + \xi_m A_1,\quad Y = B_0 + \xi_m B_1,
\]
and thus compute $XY$ in terms of $A_0,A_1,B_0,B_1 \in \mathbb{F}^{n \times n}_{m-1}$ using three $\mul_{n,\mathbb{F}_{m-1}}$ by Proposition~\ref{prop:matrix mult}. Each $\mul_{n,\mathbb{F}_{m-1}}$ in turn costs three $\mul_{n,\mathbb{F}_{m-2}}$ by Proposition~\ref{prop:matrix mult}. Repeating the argument until we arrive at $\mul_{n,\mathbb{F}_0} = \mul_{n,\Bbbk}$, we see that the total number of $\mul_{n,\Bbbk}$ is $3^m$.

Now if $X$ above is generic, depending on whether $f_m(x) = x^2 + \tau_m$ or $x^2 + x + \tau_m$, Algorithm~\ref{alg:inverse} or Algorithm~\ref{alg:inverse2} takes three $\mul_{n,\mathbb{F}_{m-1}}$ and two $\inv_{n,\mathbb{F}_{m-1}}$. As in the multiplication case, the argument applies recursively to $m,m-1,\dots,2,1$. Writing $\num(\op)$ for the number of operation $\op$, we have 
\[
\num(\inv_{n,\mathbb{F}_k}) = 3 \num(\mul_{n,\mathbb{F}_{k-1}}) + 2 \num(\inv_{n,\mathbb{F}_{k-1}}),\qquad k = 1,\dots,m.
\]
By Proposition~\ref{prop:matrix mult}, we have $\num(\mul_{n,\mathbb{F}_k}) = 3 \num(\mul_{n,\mathbb{F}_{k-1}})$. Hence we obtain
\[
\num(\inv_{n,\mathbb{F}}) = 3(3^m - 2^m) \num(\mul_{n,\Bbbk}) + 2^m \num(\inv_{n,\Bbbk})
\]
as required.
\end{proof}
Slightly abusing terminologies, we will call the multiplication and inversion algorithms in the proof of Theorem~\ref{thm:GFiter} Gauss multiplication and Frobenius inversion for iterated quadratic extension respectively. Both rely on the general technique of \emph{divide-and-conquer}. Moreover, Gauss multiplication for iterated quadratic extension is in spirit the same as the Karatsuba algorithm \cite{Karatsuba95} for fast integer multiplication and multidimensional fast Fourier transform \cite{SS95}. Indeed, all three algorithms may be viewed as fast algorithms for modular polynomial multiplication in a ring
\[
\Bbbk[x_1,\dots, x_m]/\langle x_1^d + \tau_1,\dots, x_m^d + \tau_m \rangle \simeq \Bbbk[x_1]/\langle x_1^d + \tau_1 \rangle \otimes  \cdots \otimes \Bbbk[x_m]/\langle x_m^d + \tau_m \rangle
\]
for different choices of $m,d, \tau_1,\dots, \tau_m$. To be more specific, we have 
\begin{enumerate}[\normalfont(a)]
\item Karatsuba algorithm: $m = 1$, $d =0$, $\tau_1  = -1$.
\item Multidimensional fast Fourier transform: $m \in \mathbb{N}$, $d \in \mathbb{N}$, $\tau_1 = \cdots = \tau_m = -1$.
\item Gauss multiplication for iterated quadratic: $m \in \mathbb{N}$, $d = 2$, $\tau_1,\dots, \tau_m$ as in \eqref{eq:tower}--\eqref{eq:K_m quotient}.
\end{enumerate}

\subsection{Moore--Penrose and Sherman--Morrison}

One might ask if Frobenius inversion extends  to pseudoinverse. In particular, does  \eqref{eq:lem:inverse} hold if matrix inverse is replaced by Moore--Penrose inverse? The answer is no, which can be seen by taking $\Bbbk = \mathbb{R}$, $\mathbb{F}= \mathbb{C}$, in which case \eqref{eq:lem:inverse} is just \eqref{eq:Finv}. Let
\[
X = \begin{bmatrix}
    0 & 0 & 0 \\
    0 & 1 & 0 \\
    0 & 0 & i
\end{bmatrix}, \qquad
X^\dagger = 
\begin{bmatrix}
    0 & 0 & 0 \\
    0 & 1 & 0 \\
    0 & 0 & -i
\end{bmatrix},  \qquad
Y = \begin{bmatrix}
    0 & 0 & 0 \\
    0 & 1 & 0 \\
    0 & 0 & 0
\end{bmatrix},
\]
where $Y$ denotes the right-hand side of \eqref{eq:Finv} with Moore--Penrose inverse in place of matrix inverse. Clearly $X^\dagger \ne Y$.

One may be led to think that Frobenius inversion \eqref{eq:lem:inverse} is a consequence of Sherman--Morrison--Woodbury-type identities such as
\begin{align*}
\left(A + B\right)^{-1} &= A^{-1} - A^{-1} (B^{-1} + A^{-1})^{-1} A^{-1}= A^{-1} - A^{-1}\left(AB^{-1} + I\right)^{-1}\\
&= A^{-1} - \left(A + AB^{-1}A\right)^{-1}= A^{-1} - A^{-1}B\left(A + B\right)^{-1}
\end{align*}
but it is not. The point to note is that such identities invariably involve at least one matrix inversion in $\mathbb{F}^{n \times n}$ whereas \eqref{eq:lem:inverse} is purely in terms of matrix inversions in $\Bbbk^{n \times n}$.

\section{Solving linear systems with Frobenius inversion}\label{sec:lin}

We remind the reader that the previous section is the only one about Frobenius inversion over arbitrary fields. In this and all subsequent sections we return to the familiar setting of real and complex fields. In this section we discuss the solution of a system of complex linear equations
\begin{equation}\label{eq:linear system}
(A+iB)(x+iy) = c+id, \qquad A,B \in \mathbb{R}^{n \times n}, \quad c,d \in \mathbb{R}^n
\end{equation}
for $x,y \in \mathbb{R}^n$  with Frobenius inversion in a way that does not require computing explicit inverse and the circumstances under which this method is superior. 
For the sake of discussion, we will assume throughout this section that we use LU factorization, computed using Gaussian Elimination with Partial Pivoting, as our main tool, but one may easily substitute it with any other standard matrix decomposition.

The most straightforward way to solve \eqref{eq:linear system} would be directly as a complex linear system with coefficient matrix $ A + iB \in \mathbb{C}^{n \times n}$. As we mentioned at the beginning of this article, the IEEE-754 floating point standard \cite{ieee} does not support complex floating point arithmetic and relies on software to convert them to real floating point arithmetic \cite[p.~55]{Overton}. For greater control, we might instead transform \eqref{eq:linear system} into a real linear system with coefficient matrix $\begin{bsmallmatrix} A & -B \\ B & A \end{bsmallmatrix} \in \mathbb{R}^{2n \times 2n}$. Note that the condition numbers of the coefficient matrices are identical:
\[
\kappa_2(A + iB) = \kappa_2 \biggl( \begin{bmatrix} A & -B \\ B & A \end{bmatrix} \biggr).
\]
However, an alternative that takes advantage of Frobenius inversion \eqref{eq:Finv} would be Algorithm~\ref{alg:Frob:linear}. For simplicity, we assume that $A$ is invertible below but if not, this can be easily addressed with a simple trick in Section~\ref{sec:rand}.

\begin{algorithm}[!htb]
\caption{Linear system with Frobenius inversion and LU factorization}
\label{alg:Frob:linear}
\begin{algorithmic}[1]
\Require $A+iB \in \GL_n(\mathbb{C})$ with $A \in \GL_n(\mathbb{R})$, $c,d \in \mathbb{R}^n$
\State LU factorize $A = P_1^\tp L_1 U_1$;
\State forward and backward substitute for $X_1$ in $L_1 U_1 X_1 = P_1 B$;
\State matrix multiply and add $X_2 = A+BX_1$;
\State LU factorize $X_2 = P_2^\tp L_2 U_2$;
\State forward and backward substitute for $x_1,y_1$ in $L_2 U_2 [x_1,y_1] = P_2 [c,d]$;
\State forward and backward substitute for $x_2,y_2$ in $L_1 U_1 [x_2,y_2] = P_1 B[y_1,x_1]$;
\State vector add $x =  x_1 + x_2 $, $y = y_2-y_1$;
\Ensure solution of $(A+iB)(x+iy) = c+id$
\end{algorithmic}
\end{algorithm}

One may easily verify that Algorithm~\ref{alg:Frob:linear} gives the solution as claimed, by virtue of the expression \eqref{eq:Finv} for Frobenius inversion.
Observe that Algorithm~\ref{alg:Frob:linear} involves only the matrices $A$, $B$, and $A +BA^{-1}B$, unsurprising since these are the matrices that appear in \eqref{eq:Finv}. In the rest of this section, we will establish that there is an open subset of matrices $A + iB \in \mathbb{C}^{n \times n}$ with
\begin{equation}\label{eq:cond1}
\max\bigl( \kappa_2(A), \kappa_2(B), \kappa_2 (A + BA^{-1}B) \bigr) \ll \kappa_2(A + iB).
\end{equation}
A consequence is that ill-conditioned complex matrices with well-conditioned real and imaginary parts are common; in particular, there are uncountably many and they occur with positive probability with respect to any reasonable probability measure (e.g., Gaussian) on  $\mathbb{C}^{n \times n}$.
In fact we will show in Theorems~\ref{thm:illcond1} and \ref{thm:illcond1} that $A + iB  \in \mathbb{C}^{n \times n}$ or  $\begin{bsmallmatrix} A & -B \\ B & A \end{bsmallmatrix} \in \mathbb{R}^{2n \times 2n}$  can be arbitrarily ill-conditioned when $A$ and $B$ are well-conditioned or even perfectly conditioned, a situation that is tailor-made for  Algorithm~\ref{alg:Frob:linear}.

\begin{lemma}\label{lem:condnumber}
Let $A,B \in \mathbb{R}^{n\times n}$ with $ A  = G H$ for some $G,H\in \GL_n(\mathbb{R})$. Let $N \coloneqq H^{-1} B G^{-1}$. Then
\[
\kappa(G) \kappa(I + iN) \kappa (H) \ge \kappa(A + iB)  \ge \max\left\lbrace \frac{\lVert (I + iN)^{-1}  \rVert}{\kappa(H)}, \frac{\lVert (I + iN)^{-1}  \rVert}{\kappa(G)}
\right\rbrace.
\]
\end{lemma}
\begin{proof}
Let $X \coloneqq A + iB = H (I + i N) G$. Then
\[
\kappa(X) \le \kappa(H) \kappa(I + iN) \kappa (G).
\]
For the other inequality, since $X^{-1} = G^{-1} (I + i N)^{-1} H^{-1}$,
\begin{equation}\label{eq:condnumber0}
\kappa(X) = \lVert X \rVert \lVert X^{-1} \rVert = \lVert A + iB \rVert  \lVert G^{-1} (I + i N)^{-1} H^{-1} \rVert.
\end{equation}
As $\lVert X v \rVert =  \lVert A v + i B v \rVert \ge \lVert A v \rVert$ for all $v \in \mathbb{R}^n$, we have
\begin{equation}\label{eq:condnumber1}
\lVert X \rVert \ge \lVert A \rVert.
\end{equation}
Since the spectral norm is submultiplicative,
\[
\lVert X  \rVert = \lVert G G^{-1} X H^{-1} H  \rVert \le \lVert G \rVert  \lVert G^{-1} X H^{-1} \rVert  \lVert H \rVert, \qquad
\lVert H \rVert = \lVert A G^{-1} \rVert \le \lVert A \rVert \lVert G^{-1} \rVert,
\]
and we obtain
\begin{equation}\label{eq:condnumber2}
\lVert G^{-1} X H^{-1} \rVert \ge \frac{\lVert X  \rVert}{\lVert G \rVert \lVert H \rVert} \ge \frac{\lVert X  \rVert}{\lVert G \rVert \lVert G^{-1} \rVert \lVert A \rVert} = \frac{\lVert X  \rVert}{\kappa(G) \lVert A \rVert}.
\end{equation}
Assembling \eqref{eq:condnumber0}--\eqref{eq:condnumber2}, we get
\[
\kappa(X) \ge \lVert A \rVert  \frac{\lVert (I + iN)^{-1}  \rVert}{\kappa(G) \lVert A \rVert} =  \frac{\lVert (I + iN)^{-1}  \rVert}{\kappa(G)}. 
\]
Swapping the roles of $G$ and $H$, we get $\kappa(X)  \ge \lVert (I + iN)^{-1}  \rVert/\kappa(H)$.
\end{proof}

Choosing specific matrix decompositions $A = G H$ and imposing conditions on $N$ in Lemma~\ref{lem:condnumber} allows us to deduce better bounds for $\kappa(A + iB)$.
\begin{corollary}\label{cor:condnumber}
Let $A \in \GL_n(\mathbb{R})$ and $B \in \mathbb{R}^{n\times n}$. Let $A = Q R$ be a QR decomposition with $Q\in \O_n(\mathbb{R})$ and $R \in \mathbb{R}^{n \times n}$ upper triangular. If $N = Q^\tp B R^{-1}$ is a normal matrix with eigenvalues $\lambda_1,\dots, \lambda_n \in \mathbb{C}$, then
\[
\kappa(A) \frac{\max_{k = 1,\dots,n}\lvert 1 + i \lambda_k \rvert }{\min_{k = 1,\dots,n}\lvert 1 + i \lambda_k \rvert}  \ge \kappa(A + iB)  \ge \max_{k = 1,\dots,n}  \frac{1}{ \lvert 1 + i \lambda_k \rvert }.
\]
\end{corollary}
\begin{proof}
It suffices to observe that $\kappa (A) = \kappa (R)$, $\lVert A \rVert = \lVert R \rVert$, and $N$ is unitarily diagonalizable.
\end{proof}

We may now show that for any well-conditioned $A \in \mathbb{R}^{n\times n}$, there is a well-conditioned $B \in \mathbb{R}^{n\times n}$ such that $A + i B \in \mathbb{C}^{n \times n}$ is arbitrarily ill-conditioned (i.e., $\gamma \to \infty$).
\begin{theorem}[Ill-conditioned matrices with well-conditioned real and imaginary parts]\label{thm:illcond1}
Let $A \in \GL_n(\mathbb{R})$ and $\gamma  \ge 1$. Then there exists $B \in \mathbb{R}^{n\times n}$ such that 
\[
\kappa(A)  \ge \kappa(B),\qquad \kappa(A + iB) \ge \gamma.
\]
\end{theorem}
\begin{proof}
Consider the normal matrix 
\[
N = \begin{bmatrix}
0 & -t & 0 & \cdots & 0 \\
t & 0 & 0 & \cdots & 0 \\
0 & 0 & t & \cdots & 0 \\ 
\vdots & \vdots & \vdots & \ddots & \vdots \\
0 & 0 & 0 & \cdots & t
\end{bmatrix} \in \mathbb{R}^{n \times n}
\]
where $t\ge 0$ is a  real parameter to be chosen later. The eigenvalues of $N$ are $\pm it$ and $t$ so $\kappa(N) = 1$. Let $A = QR$ be the QR decomposition of $A$ and set $B \coloneqq QNR$. Then $\kappa (B) \le \kappa(N) \kappa(A) = \kappa(A)$. By Corollary~\ref{cor:condnumber}, we have $\kappa (A + iB)  \ge 1/(1 -t)$. Hence if $t$ is chosen in the interval $[1 -1/\gamma , 1)$, we get $\kappa(A + iB) \ge  \gamma$.
\end{proof}

Interestingly, we may use the Frobenius inversion formula to push  Theorem~\ref{thm:illcond1} to the extreme, constructing an arbitrarily ill-conditioned complex matrix with perfectly conditioned real and imaginary parts.
\begin{proposition}
Let $\gamma \ge 1$. There exists $A + iB \in \mathbb{C}^{n \times n}$ with $\kappa (A + iB) \ge \gamma$ and $\kappa(A) = \kappa(B) = 1$.
\end{proposition}
\begin{proof}
Let $Q\in \O_n(\mathbb{R})$. The Frobenius inversion formula \eqref{eq:Finv} gives
\[
( I + i Q)^{-1} = Q^\tp (Q^\tp + Q)^{-1} - i (Q+Q^\tp)^{-1} = (Q^\tp - i I) (Q + Q^\tp)^{-1}  = (I - i Q) (Q^2 + I)^{-1}.
\]
An orthogonal matrix must have an eigenvalue decomposition of the form $Q = U \Lambda U^\ha $ for some $U\in \U_n(\mathbb{C})$ and $\Lambda =\diag(\lambda_1,\dots,\lambda_n)\in \mathbb{C}^{n\times n}$ with  $\lvert \lambda_1\rvert = \dots = \lvert \lambda_n \rvert = 1$. Therefore
$I + iQ = U\diag( 1 + i \lambda_1, \dots,  1 + i \lambda_n) U^\ha $ and
\[
(I - i Q) (Q ^2+ I)^{-1} = U \diag \left( \frac{1 - i \lambda_1}{ 1 + \lambda_1^2}, \dots, 
\frac{1 - i \lambda_n }{ 1 + \lambda_n^2}
\right) U^\ha  = U \diag \left( \frac{1}{ 1 + i \lambda_1}, \dots, 
\frac{1}{ 1 + i \lambda_n}
\right) U^\ha  .
\]
Since the spectral norm is unitarily invariant,
\[
\lVert I + i Q \rVert = \max_{k = 1,\dots,n}  \; \lvert  1 + i\lambda_k\rvert,\qquad 
\lVert (I - i Q) (Q ^2+ I)^{-1} \rVert = \max_{k = 1,\dots,n}\; \biggl\lvert  \frac{1 - i \lambda_k }{ 1 + \lambda_k^2} \biggr\rvert,
\]
from which we deduce that 
\[
\kappa(I + iQ) = \Bigl(\max_{k = 1,\dots,n}  \; \lvert  1 + i\lambda_k \rvert \Bigr) \cdot
\biggl( \max_{k = 1,\dots,n}\; \biggl\lvert  \frac{1 - i \lambda_k }{ 1 + \lambda_k^2} \biggr\rvert \biggr) \ge \sqrt{2} \max_{k = 1,\dots,n}
\biggl\lvert  \frac{1}{ 1 + i \lambda_k} \biggr\rvert.
\]
The last inequality follows from the fact that $\lambda_k $'s are unit complex numbers. Now observe that if we choose $\lambda_k$ to be sufficiently close to $i$, then $\kappa(I + iQ)$ can be made arbitrarily large. 
\end{proof}

Note that Frobenius inversion \eqref{eq:Finv} and Algorithm~\ref{alg:Frob:linear} avoids $A + iB$ and work instead with the matrices $A$, $B$, and $A + BA^{-1}B$. It is not difficult to tweak Theorem~\ref{thm:illcond1} to add $A + BA^{-1}B$  to the mix.
\begin{theorem}[Ill-conditioned matrices for Frobenius inversion]\label{thm:illcond2}
Let $A \in \GL_{2n}(\mathbb{R})$ and $\gamma  \ge 1$. Then there exists $B \in \mathbb{R}^{2n\times 2n}$ such that 
\[
\kappa(A)  \ge \max \bigl( \kappa(B),\kappa(A + BA^{-1}B)\bigr),\qquad \kappa(A + iB) \ge \gamma.
\]
In other words, for any invertible matrix $A$ there exists a well-conditioned $B$ such that $A + BA^{-1}B$ is well-conditioned but $A + i B$ is arbitrarily ill-conditioned.
\end{theorem}
\begin{proof}
Consider the skew-symmetric (and therefore normal) matrix 
\[
N = \begin{bmatrix}
0 & -t  & \cdots & 0 & 0 \\
t & 0  & \cdots & 0  & 0\\
\vdots & \vdots & \ddots & \vdots &\vdots \\
0 & 0  & \cdots & 0 & -t \\ 
0 & 0  & \cdots & t & 0 
\end{bmatrix}  \in \mathbb{R}^{2n \times 2n},
\]
where $t\ge 0$ is a  real parameter to be chosen later. The eigenvalues of $N$ are $\pm it$ so $\kappa(N) = 1$. Let $A = QR$ be the QR decomposition of $A$ and set $B \coloneqq QNR$. Then $\kappa (B) \le \kappa(N) \kappa(A) = \kappa(A)$. By Corollary~\ref{cor:condnumber}, we have $\kappa (A + iB)  \ge 1/(1 -t)$. Hence if $t$ is chosen in the interval $[1 -1/\gamma , 1)$, we get $\kappa(A + iB) \ge  \gamma$. We also have 
\[
A + BA^{-1}B = QR + (Q N R) (R^{-1}Q^\tp) (Q N R) = Q(I+ N^2)R
\]
and as $I + N^2 = (1-t^2) I$, we see that $\kappa(I+ N^2) = 1$ and so
\[
\kappa(A + BA^{-1}B) \le \kappa(I + N^2) \kappa(A) \le \kappa(A). \qedhere
\]
\end{proof}

Theorems~\ref{thm:illcond1} and \ref{thm:illcond2} show the existence of arbitrarily ill-conditioned complex matrices with well-conditioned real and imaginary parts (and also  $A + BA^{-1}B$ in the case of Theorem~\ref{thm:illcond2}). We next show that such matrices exist in abundance --- not only are there uncountably many of them, they occur with nonzero probability, showing that there is no shortage of matrices where Algorithm~\ref{alg:Frob:linear} provides an edge by avoiding the ill-conditioning of $A + iB $ or, equivalently, of $\begin{bsmallmatrix} A & -B \\ B & A \end{bsmallmatrix}$.
\begin{proposition}\label{prop:illcond}
Let $\mathcal{S}_n \coloneqq \{A + iB \in \GL_n(\mathbb{C}): A,B \in \GL_n(\mathbb{R})\}$.  For any $1 < \beta \le \gamma < \infty$,
\begin{align*}
\bigl\{A + iB \in  \mathcal{S}_n  &: \kappa(B) \le \kappa(A) \le \beta,\; \kappa(A + iB) \ge \gamma \bigr\},\\
\bigl\{A + iB \in  \mathcal{S}_{2n}  &: \max\bigl( \kappa(B) \kappa(A + BA^{-1}B)\bigr) \le \kappa(A) \le \beta,\; \kappa(A + iB) \ge \gamma \bigr\}
\end{align*}
have nonempty interiors in $\mathbb{C}^{n \times n}$ and  $\mathbb{C}^{2n \times 2n}$ respectively.
\end{proposition}
\begin{proof}
Consider the maps $\varphi_1: \mathcal{S}_n \to [1,\infty) \times  \mathbb{R} \times [1,\infty)$ and $\varphi_2: \mathcal{S}_{2n} \to [1, \infty] \times \mathbb{R}  \times \mathbb{R}  \times [1,\infty)$ defined by
\begin{align*}
\varphi_1(A + iB) &= \bigl(\kappa(A), \kappa(A) - \kappa(B), \kappa(A + iB) \bigr), \\
\varphi_2(A + iB) &=  \bigl(\kappa(A), \kappa(A) - \kappa(B), \kappa(A) - \kappa(A + B A^{-1} B), \kappa(A + iB) \bigr)
\end{align*}
respectively. These are continuous since the condition number $\kappa$ is a continuous function on invertible matrices. For any $\gamma \ge \beta > 1$, the preimage  $\varphi_1^{-1}( [1, \beta] \times [0,\infty) \times [\gamma,\infty)) \ne \varnothing$ by Theorem~\ref{thm:illcond1} and $\varphi_2^{-1} ( (1, \beta] \times [0,\infty) \times [0,\infty) \times [\gamma,\infty) ) \ne \varnothing$ by Theorem~\ref{thm:illcond2}. Note that these preimages are precisely the required sets in question and by continuity of $\varphi_1$ and $\varphi_2$ they must  have nonempty interiors.
\end{proof} 

One may wonder if there is a flip side to Theorems~\ref{thm:illcond1} and \ref{thm:illcond2}, i.e., are there complex matrices whose condition numbers are controlled by their real and imaginary parts? We conclude this section by giving a construction of such matrices.
\begin{proposition}
Let $A, B\in \GL_n(\mathbb{R})$. If $\sigma_n(A) = \mu \sigma_1(B) $ for some $\mu > 1$, then 
\[
\frac{\kappa(A) - 1}{2}< \kappa(A + i B) \le \kappa(A) + \frac{\kappa(A) + 1}{\mu - 1}
\]  
In particular, if $A$ is well-conditioned and $\mu \gg 1$, then $A + iB$ is also well-conditioned.
\end{proposition}
\begin{proof}
We first show a more generally inequality that holds for arbitrary $X, Y\in \mathbb{C}^{n\times n}$. Recall that singular values satisfy
\[
\sigma_{i + j - 1}(X + Y) \le \sigma_{i}(X) + \sigma_{j}(Y),\quad 1\le  i + j - 1 \le n.
\]
In particular, we have  $\sigma_1(X + Y) \le \sigma_1(X) + \sigma_1(Y)$ and $\sigma_1((X+Y) + (-Y) ) \le \sigma_1(X+Y) + \sigma_1(Y)$, and therefore
\[
\sigma_1(X) - \sigma_1(Y) \le \sigma_1(X+Y)  \le \sigma_1(X) + \sigma_1(Y).
\]
Also, we have  $\sigma_n(X + Y) \le \sigma_n(X) + \sigma_1(Y)$ and $\sigma_n((X+Y) + (-Y) ) \le \sigma_n(X+Y) + \sigma_1(Y)$, and therefore
\[
\sigma_n(X) - \sigma_1(Y) \le \sigma_n(X+Y)  \le \sigma_n(X) + \sigma_1(Y).
\]
If $\sigma_n(X) > \sigma_1(Y)$, then 
\[
\frac{\sigma_1(X) - \sigma_1(Y)}{\sigma_n(X) + \sigma_1(Y)} \le \frac{\sigma_1(X+Y)}{\sigma_n(X+Y)} \le \frac{\sigma_1(X) + \sigma_1(Y)}{\sigma_n(X) - \sigma_1(Y)}.
\]
Rewriting in terms of condition number,
\[
\frac{\kappa(X) \sigma_n(X) - \sigma_1(Y)}{\sigma_n(X) + \sigma_1(Y)}  \le \kappa (X+Y) \le \frac{\kappa(X) \sigma_n(X) + \sigma_1(Y)}{\sigma_n(X) - \sigma_1(Y)}.
\]
Hence
\[
\kappa(X) - (\kappa(X)+1) \left[ \frac{\sigma_1(Y)}{\sigma_n(X) + \sigma_1(Y)} \right] \le \kappa (X+Y) \le \kappa(X) + (1 + \kappa(X)) \left[ \frac{\sigma_1(Y)}{\sigma_n(X) - \sigma_1(Y)} \right].
\]
Since $\sigma_n(X) > \sigma_1(Y)$, we have
\[
\frac{\sigma_1(Y)}{\sigma_n(X) + \sigma_1(Y)}  < \frac{1}{2}
\] 
and so $\kappa(X+Y) > (\kappa(X) - 1)/2$.  If we set $X = A$, $Y = iB$, and substitute  $\sigma_n(A) = \mu \sigma_1(B) $, the required inequality follows.
\end{proof}

\section{Computing explicit inverse with Frobenius inversion}\label{sec:explicit}

We have discussed at length in Section~\ref{sec:yes} why computing an explicit inverse for a matrix is sometimes an inevitable or desirable endeavor. Here we will discuss the numerical properties of inverting a complex matrix using Frobenius inversion. The quadratic extension $\mathbb{C}$ over $\mathbb{R}$ falls under Algorithm~\ref{alg:inverse} (as opposed to Algorithm~\ref{alg:inverse2}) and here we will compare its computational complexity with the complex matrix inversion algorithm based on LU decomposition, the standard method of choice for computing explicit inverse in \textsc{Matlab}, Maple, Julia, and Python.

\begin{algorithm}[!htb]
\caption{Inversion with LU decomposition}
\label{alg:LU1}
\begin{algorithmic}[1]
\Require $X \in \GL_n(\mathbb{C})$
\State LU factorize $X = P^\tp LU$;
\State backward substitute for $X_1$ in  $UX_1 = I$; \label{alg:LU1:step2}
\State forward substitute for $X_2$ in $X_2L =X_1$; \label{alg:LU1:step3}
\Ensure inverse $X^{-1} = X_2P$
\end{algorithmic}
\end{algorithm}

Strictly speaking, Algorithm~\ref{alg:LU1} computes the left inverse of the input matrix $X$, i.e., $YX = I$. We may also compute its right inverse, i.e., $XY = I$, by swapping the order of backward and forward substitutions.  Even though the left and right inverse of a matrix are always equal mathematically, i.e., in exact arithmetic, they can be different numerically, i.e., in floating-point arithmetic \cite{HighamBook}. Any subsequent mentions of Algorithm~\ref{alg:LU1} would also hold with its right inverse variant.

\subsection{Floating point complexity}\label{sec:compare}

In Section~\ref{sec:symb}, we discussed computational complexity of Frobenius inversion for $\inv_{n,\mathbb{F}}$ in units of $\inv_{n,\Bbbk}$, $\mul_{n,\Bbbk}$, $\add_{n,\Bbbk}$, which are in turn treated as black boxes. Here, for the case of $\mathbb{F} = \mathbb{C}$ and $\Bbbk = \mathbb{R}$, we will count actual real flops, i.e., real floating point operations; we will not distinguish between the cost of real addition and real multiplication since there is no noticeable difference in their latency on modern processors --- each would count as a single flop. With this in mind, we do not use Gauss multiplication for complex \emph{numbers} since it trades one real multiplication for three real additions, i.e., more expensive if real addition costs the same as real multiplication. We caution our reader that this says nothing about Gauss multiplication for complex \emph{matrices} since real matrix addition ($n^2$ flops) is still much cheaper than real matrix multiplication ($2n^3$ flops).

Our implementation of Frobenius inversion in Algorithm~\ref{alg:inverse} requires real matrix multiplication and real matrix inversion as subroutines. Let   $\mathscr{A}_{\inv}$ and $\mathscr{A}_{\mul}$ be respectively any two algorithms for real matrix inversion and real matrix multiplication, with real flop counts $T_{\inv}(n)$ and $T_{\mul}(n)$ on real $n \times n$ matrix inputs. There is little loss of generality in making two mild assumptions about the inversion algorithm $\mathscr{A}_{\inv}$:
\begin{enumerate}[\normalfont(i)]
\item\label{it:c2r} $\mathscr{A}_{\inv}$ also works for complex matrix inputs at the cost of a multiple of $T_{\inv}(n)$, the multiple being the cost of a complex flop in terms of real flops;

\item\label{it:a=m} the number of complex additions and the number of complex multiplications in $\mathscr{A}_{\inv}$ applied to complex matrix inputs both take the form $cn^k + $ lower order terms, i.e., same dominant term but lower order terms may differ.
\end{enumerate}
Note that these assumptions are satisfied if $\mathscr{A}_{\inv}$ is chosen to be Algorithm~\ref{alg:LU1}, even if we replace the LU decomposition in  them by other decompositions like QR or Cholesky (if applicable).

\begin{theorem}[Frobenius inversion versus standard inversion]\label{thm:threshold}
Let $\lambda > 0$ be such that the cost of computing $A^{-1} B$ for any $A\in \GL_n(\mathbb{R})$, $B \in \mathbb{R}^{n \times n}$ is bounded by $\lambda T_{\mul}(n)$.  Algorithm~\ref{alg:inverse} with subroutines $\mathscr{A}_{\inv}$ and $\mathscr{A}_{\mul}$ on real inputs $A$ and $B$ is asymptotically faster than directly applying $\mathscr{A}_{\inv}$ on complex input $A +iB$ if and only if
\[
\lim_{n\to \infty}  \frac{ T_{\inv} (n)}{T_{\mul} (n)} >  \frac{2 + \lambda}{3}.  \qedhere
\]
\end{theorem}
\begin{proof}
The first two steps of Algorithm~\ref{alg:inverse} computes $A^{-1} B$, which costs $\lambda T_{\mul}(n)$ operations. Thereafter, computing $BA^{-1}B$ costs one matrix multiplication, $A + BA^{-1}B$ one matrix addition, $S = (A + BA^{-1}B)^{-1}$ one matrix inversion, and finally $A^{-1}B S$ one matrix multiplication. We disregard matrix addition since it takes $O(n^2)$ flops and does not contribute to the dominant term. So the cost in real flops of Algorithm~\ref{alg:inverse} is dominated by $T_{\inv}(n) + (2 + \lambda)T_{\mul}(n)$ for $n$ sufficiently large.

Now suppose we apply $\mathscr{A}_{\inv}$ directly to the complex matrix $A + i B$. Each complex addition costs two real flops (real additions) and each complex multiplication costs six real flops (four real multiplications and two real additions). Also, by assumption~\ref{it:a=m}, $\mathscr{A}_{\inv}$ has the same number of real additions and real multiplications, ignoring lower order terms.  So the cost in real flops of $\mathscr{A}_{\inv}$ applied directly to $A + iB \in \mathbb{C}^{n \times n}$ is dominated by $4T_{\inv}(n)$ for $n$ sufficiently large, i.e., the `multiple' in assumption~\ref{it:c2r} is $4$.

Hence Algorithm~\ref{alg:inverse} is faster than $\mathscr{A}_{\inv}$ if and only if
\[
    4T_{\inv}(n) > T_{\inv}(n) +(2 + \lambda )T_{\mul}(n)
\]
for $n$ sufficiently large, i.e., $\lim_{n\to \infty}  T_{\inv}(n)/T_{\mul}(n)  > (2 + \lambda)/3$. 
\end{proof}

As we discussed in Section~\ref{sec:Frob inv}, Algorithm~\ref{alg:inverse} is written in a general form that applies over arbitrary fields and to both symbolic and numerical computing. However, when restricted to $\Bbbk = \mathbb{R}$, $\mathbb{F} = \mathbb{C}$, and with numerical computing in mind, we may state a more specific version Algorithm~\ref{alg:inverse3} involving LU decomposition and backsubstitutions for $AX = B$. 

\begin{algorithm}[!htb]
\caption{Frobenius inversion with LU decomposition}
\label{alg:inverse3}
\begin{algorithmic}[1]
\Require $X = A + i B \in \GL_n(\mathbb{C})$ with $A \in \GL_n(\mathbb{R})$ 
\State LU factorize $A = P^\tp_1 L_1U_1$; \label{step:LU1}
\State forward and backward substitute for $X_1$ in $L_1 U_1 X_1 = P_1 B$; \label{step:back1}
\State matrix multiply and add $X_ 2 = A + BX_1$; 
\State LU factorize $X_2 = P^\tp_2 L_2U_2$; \label{step:LU2}
\State forward and backward substitute for $X_3,X_4$ in $[X_3, X_4] P_2 L_2 U_2 = [I, X_1]$; \label{step:back2}
\Ensure inverse $X^{-1} = X_3 - i X_4$
\end{algorithmic}
\end{algorithm}

Note that Steps~\ref{step:LU1} and \ref{step:back1} in Algorithm~\ref{alg:inverse3} are essentially just Algorithm~\ref{alg:LU1} with a different right-hand side, and likewise for Steps~\ref{step:LU2} and \ref{step:back2}. Hence we may regard Algorithm~\ref{alg:inverse3} as Algorithm~\ref{alg:inverse} with $\mathscr{A}_{\inv}$ given by Algorithm~\ref{alg:LU1} and $\mathscr{A}_{\mul}$ given by standard matrix multiplication. These choices allow us to assign flop counts to illustrate Theorem~\ref{thm:threshold}.

\begin{proposition}[Flop counts]\label{prop:threshold}
Algorithm~\ref{alg:inverse3} has a real flop count of $28n^3/3$ whereas applying Algorithm~\ref{alg:LU1} directly to a complex matrix has a real flop count of $32n^3/3$.
\end{proposition}
\begin{proof}
These come from a straightforward flop count of the respective algorithms, dropping lower order terms.
\end{proof}
With these choices for $\mathscr{A}_{\inv}$ and $\mathscr{A}_{\mul}$, the cost of computing $A^{-1} B$ is asymptotically bounded by $\frac{4}{3} T_{\mul}(n)$ --- one LU decomposition plus $2n$ forward and backward substitutions. So $\lambda = 4/3$ and $(2 + \lambda)/3 = 10/9 < 4/3 = \lim_{n\to \infty}  T_{\inv} (n)/T_{\mul}(n)$. Hence inverting a complex matrix via Algorithm~\ref{alg:inverse3} is indeed faster than inverting it directly with Algorithm~\ref{alg:LU1}, as predicted by Theorem~\ref{thm:threshold}; we will also present numerical evidence that supports this in Section~\ref{sec:expr}.

Proposition~\ref{prop:threshold} also tells us that variations of Frobenius inversion formula like the one proposed in \cite{inversion_ref9} can obliterate the computational savings afforded by Frobenius inversion. These variants all take the form $X^{-1} = (ZX)^{-1} Z$ for some $Z\in \GL_n(\mathbb{C})$ and the extra matrix multiplications incur additional costs. As a result, the variant in \cite{inversion_ref9} takes $34n^3$ real flops, which exceeds even the $32n^3/3$ by standard methods (e.g., Algorithm~\ref{alg:LU1}).

\subsection{Almost sure Frobenius inversion}\label{sec:rand}

One obvious shortcoming of Frobenius inversion is that \eqref{eq:Finv}  requires the real part $A$ to be invertible. It is easy to modify \eqref{eq:Finv}  to
\[
(A + i B)^{-1} =  B^{-1}A(AB^{-1}A + B)^{-1} - i (AB^{-1}A + B)^{-1}
\]
if  $B$ is invertible. Nevertheless we may well have circumstances where $A +iB$ is invertible but neither $A$ nor $B$ is, e.g., $\begin{bsmallmatrix} 1 & 0 \\ 0 & i \end{bsmallmatrix}$. Here we will extend Frobenius inversion to any invertible $A + iB$ in a way that preserves its computational complexity --- this last qualification is important. As we saw in Proposition~\ref{prop:threshold}, the speed improvement of Frobenius inversion comes from the constants, i.e., it inverts an $n \times n$ complex matrix with $28n^3/3$ real flops whereas Algorithm~\ref{alg:LU1} takes $32n^3/3$ real flops. As we noted in Section~\ref{sec:prev}, prior attempts such as those in \cite{inversion_ref9,1100887} at extending Frobenius inversion to all $A + iB \in \GL_n(\mathbb{C})$ invariably require the inversion of a real matrix of size $2n \times 2n$, thereby obliterating any speed improvement afforded by Frobenius inversion.

Our approach avoids any matrices of larger dimension by adding a simple randomization step that in turns depend on the following observation.
\begin{lemma}\label{lem:rand Frob}
Let $A + i B \in \GL_n(\mathbb{C})$ with $A,B\in \mathbb{R}^{n\times n}$. Then there exist at most $n$ values of $\mu \in \mathbb{R}$ such that $A - \mu B$ is singular.
\end{lemma}
\begin{proof}
As $f(t) \coloneqq \det (A + t B)$ is a polynomial of degree at most $n$ and $f(i) = \det(A + iB) \ne 0$, $f$ has at most $n$ zeros in $\mathbb{C}$. So $A-\mu B$ is invertible for all but at most $n$ values of $\mu \in \mathbb{C} \supseteq \mathbb{R}$.
\end{proof}

Algorithm~\ref{alg:rand inverse} is essentially Frobenius inversion applied to  $(1 + \mu i) (A + iB)$ for some random $\mu$.  Note that $A - \mu B$ is exactly the real part of $(1 + \mu i) (A + iB)$ and therefore invertible for all but at most $n$ values of $\mu$ by Lemma~\ref{lem:rand Frob}. The interval $(0,1)$ is chosen so that we may generate $\mu$ from the uniform distribution  but we could have also used $\mathbb{R}$ with standard normal distribution, both of which are standard implementations in numerical packages.

\begin{algorithm}[!htb]
\caption{Almost sure Frobenius inversion}
\label{alg:rand inverse}
\begin{algorithmic}[1]
\Require $X = A + i B \in \GL_n(\mathbb{C})$
\State randomly generate $\mu \in [0, 1]$; \label{alg:rand inverse:1}
\State matrix add $X_1 = A - \mu B$, $X_2 = \mu A + B$;\label{alg:rand inverse:2}
\State Frobenius invert $X_3 + i X_4 = (X_1  + i X_2 )^{-1}$; \Comment {calls Algorithm~\ref{alg:inverse3}} \label{alg:rand inverse:3}
\State matrix add $X_5 = X_3 - \mu X_4$, $X_6 = \mu X_3 +  X_4$; \label{alg:rand inverse:5}
\Ensure inverse $X^{-1} = X_5 + i X_6$
\end{algorithmic}
\end{algorithm}

Note that Algorithm~\ref{alg:inverse3} fails on a set of real dimension $2n^2 -1$, i.e., when the real part of the input is singular, but Algorithm~\ref{alg:rand inverse} has reduced this to a set of dimension zero.

\begin{proposition}\label{prop: rand inverse}
Algorithm~\ref{alg:rand inverse} has the same asymptotic time complexity as that of Algorithm~\ref{alg:inverse3}, i.e., Frobenius inversion. Algorithm~\ref{alg:rand inverse} works with probability one if $\mu$ is chosen  randomly from $[0,1]$ with any non-atomic probability measure.
\end{proposition}
\begin{proof}
The time complexity of Algorithm~\ref{alg:rand inverse} is that of  Algorithm~\ref{alg:inverse3} plus the matrix additions in Steps~\ref{alg:rand inverse:2} and \ref{alg:rand inverse:5} that cost a total of $4 \times 2n^2$ real flops. By Proposition~\ref{prop:threshold}, the time complexity of Algorithm~\ref{alg:rand inverse} is dominated by $28n^3/3$, and therefore the lower order term $8n^2$ may be ignored asymptotically.

By Lemma~\ref{lem:rand Frob}, $X_1 = A - \mu B$ in Step~\ref{alg:rand inverse:2} is invertible with probability one since any finite subset of $[0,1]$ is a null set with a non-atomic probability measure. Thus Algorithm~\ref{alg:inverse3} is applicable to $X_1 + i X_2$ and we have
\[
(X_3 - \mu X_4) + i (\mu X_3 +  X_4) = (1 + \mu i)  (X_1 + iX_2)^{-1} = (1 + \mu i) ]\bigl( X (1 + \mu i) \bigr)^{-1} = X^{-1}.
\]
The almost sure correctness of Algorithm~\ref{alg:rand inverse} follows.
\end{proof}

\subsection{Hermitian positive definite matrices}\label{sec:herm}

The case of Hermitian positive definite $A + iB$ deserves special consideration given their ubiquity. We will propose and analyze a new variant of Frobenius inversion that exploits this special structure of  $A + iB$. The happy coincidence is that a Hermitian positive definite $A + iB \in \mathbb{C}^{n \times n}$ must necessarily have symmetric positive definite $A$ and $A + BA^{-1}B \in \mathbb{R}^{n \times n}$ as well as a skew-symmetric $B \in \mathbb{R}^{n \times n}$ --- precisely the matrices we need in Frobenius inversion. Various required quantities may thus be computed via Cholesky decompositions $A = R_1^\tp  R_1$  and $A + BA^{-1}B = R_2^\tp R_2$:
\begin{equation}\label{eq:cholesky}
\begin{aligned}
A^{-1} B  &= R_1^{-1} R_1^{-\tp} B, \\
BA^{-1}B  &= B R_1^{-1} R_1^{-\tp} B = -(R_1^{-\tp}B)^\tp  (R_1^{-\tp} B),\\
(A + BA^{-1}B)^{-1}  &=  \bigl( A -(R_1^{-\tp}B)^\tp (R_1^{-\tp} B) \bigr)^{-1} = R_2^{-1} R_2^{-\tp}, \\
A^{-1}B (A + BA^{-1}B)^{-1} &= A^{-1}B R_2^{-1} R_2^{-\tp}.
\end{aligned}
\end{equation}
\begin{lemma}\label{lem:positive real part}
Let $A + iB \in \mathbb{C}^{n \times n}$ be a Hermitian positive definite matrix with $A,B\in \mathbb{R}^{n\times n}$. Then
\begin{enumerate}[\normalfont(i)]
\item $A$ is symmetric positive definite and $B$ is skew-symmetric; \label{lem:item-1}
\item $A + BA^{-1}B$ is symmetric positive definite. \label{lem:item-2}    
\end{enumerate}
In particular, $A$ is always invertible and so there is no need for an analogue of Algorithm~\ref{alg:rand inverse}.
\end{lemma}
\begin{proof}
Let $X = A + iB $ and write $X \succ 0$ to indicate positive definiteness. Then since $A = (X + \overline{X})/2$ and $B = (X - \overline{X})/2i$, $A$ is symmetric and $B$ is skew-symmetric given that $X$ is Hermitian. Since $X \succ 0$, for any $x\in \mathbb{R}^n$,
\[
x^\tp A x =\frac12 x^\ha   (X + \overline{X}) x = \frac12 x^\ha  X x + \frac12 \overline{x^\ha  X x} = x^\ha  X x \ge 0,
\]
with equality if and only if $x = 0$, showing that $A$ is positive definite. Again since $X \succ 0$ , for any $z\in \mathbb{C}^n$,
\[
z^\ha  \overline{X} z = \overline{\overline{z}^\ha  X \overline{z}} \ge 0,
\]
with equality if and only if $z =0$; so $\overline{X}$ is also Hermitian positive definite. Now observe that $A^{-\frac{1}{2}} X A^{-\frac{1}{2}} = I + i A^{-\frac{1}{2}} B A^{-\frac{1}{2}} \succ 0$  and
\[
A + BA^{-1}B = A^{\frac{1}{2}} \bigl[ I + (A^{-\frac{1}{2}}B A^{-\frac{1}{2}}) (A^{-\frac{1}{2}} B A^{-\frac{1}{2}}) \bigr] A^{\frac{1}{2}}.
\]
Therefore it suffices to establish \ref{lem:item-2} for $A = I$. As
\[
I + iB \succ 0,\quad I - iB \succ 0, \quad I + B^2 =(I - iB )^{\frac{1}{2}} (I + iB) (I - iB )^{\frac{1}{2}},
\]
it follows that $I + B^2 \succ 0$. 
An alternative way to show \ref{lem:item-2} is to use Lemma~\ref{lem:inverse}, which informs us that $(A + BA^{-1}B)^{-1}$ is the real part of $X^{-1}$, allowing us to invoke \ref{lem:item-1}. Then $X \succ 0 \Rightarrow X^{-1} \succ 0 \Rightarrow (A + BA^{-1}B)^{-1} \succ 0 \Rightarrow A + BA^{-1}B \succ 0$.
\end{proof}

With Lemma~\ref{lem:positive real part} established, we may turn \eqref{eq:cholesky} into Algorithm~\ref{alg:inverse4}, which essentially replaces the LU decompositions in Algorithm~\ref{alg:inverse3} with Cholesky decompositions, taking care to preserve symmetry and positive definiteness.

\begin{algorithm}[!htb]
\caption{Frobenius inversion with Cholesky decomposition}
\label{alg:inverse4}
\begin{algorithmic}[1]
\Require $X = A + i B$ with $A \in \GL_n(\mathbb{R})$ 
\State Cholesky decompose $A = R_1^\tp R_1$;
\State backward substitute for $X_1$ in $R_1^\tp X_1 = B$;
\State forward substitute for $X_2$ in $R_1X_2 = X_1$;
\State matrix multiply $X_3 = X_1^\tp X_1$; \label{step:smm}
\State matrix add $X_4 = A - X_3$;
\State Cholesky decompose $X_4 = R_2^\tp R_2$;
\State backward substitute for $X_5$ in $R_2^\tp X_5 =I$;
\State forward substitute for $X_6$ in  $R_2 X_6 = X_5$;
\State matrix multiply $X_7 = X_2 X_6$;
\Ensure inverse $X^{-1} = X_6 - i X_7$
\end{algorithmic}
\end{algorithm}

The standard method for inverting a Hermitian positive definite matrix is to simply replace LU decomposition in Algorithm~\ref{alg:LU1} by Cholesky decomposition, given in Algorithm~\ref{alg:Cholesky2} for easy reference.

\begin{algorithm}[!htb]
\caption{Inversion with Cholesky decomposition}
\label{alg:Cholesky2}
\begin{algorithmic}[1]
\Require $A \in \GL_n(\mathbb{C})$
\State Cholesky decompose $A = R^\ha  R$;
\State backward substitute for $X_1$ in $R^\ha X_1 = I$;
\State forward substitute for $X_2$ in $R X_2 = X_1$;
\Ensure inverse $A^{-1} = X_2$
\end{algorithmic}
\end{algorithm}

With this, we obtain an analogue of Proposition~\ref{prop:threshold}. The flop counts below show that  Algorithm~\ref{alg:inverse4} provides a 22\%  speedup over Algorithm~\ref{alg:Cholesky2}. The experiments in Section~\ref{sec:HPDexp} will attest to this improvement.
\begin{proposition}[Flop counts]\label{prop:threshold2}
Algorithm~\ref{alg:inverse4}  has a real flop count of $23n^3/3$ whereas applying Algorithm~\ref{alg:Cholesky2} directly to a complex matrix has a real flop count of $28n^3/3$.
\end{proposition}
\begin{proof}
Algorithm~\ref{alg:Cholesky2} performs one Cholesky decomposition, $n$ backward substitutions, and $n$ forward substitutions, all over $\mathbb{C}$. So its flop complexity is dominated by $n^3/3 + n^3 + n^3 = 7n^3/3$ complex flops and thus, by the same reasoning in the proof of Theorem~\ref{thm:threshold}, $28 n^3/3$ real flops. On the other hand, Algorithm~\ref{alg:inverse4} performs two Cholesky decompositions, $2n$ backward substitutions, $2n$ forward substitutions, and two matrix multiplications, all over $\mathbb{R}$. Moreover, the symmetry in $X_1^\tp X_1$ allows the matrix multiplication in Step~\ref{step:smm} to have a reduced complexity of $n^3$ real flops. Hence its flop complexity is dominated by $ 2n^3/3 + 2n^3 + 2n^3 + 3n^3 = 23 n^3/3$ real flops.
\end{proof}

We end with an observation that the discussions in this section apply as long as $A \succ 0$ and $A + BA^{-1}B \succ 0$. Indeed, another important class of matrices with this property are the $A + iB \in \mathbb{C}^{n \times n}$ with symmetric positive definite real and imaginary parts, i.e., $A \succ 0$ and $B \succ 0$ \cite[p.~209]{Higham}. By Lemma~\ref{lem:positive real part}, such matrices are not Hermitian positive definite except in the trivial case when $B = 0$. However, since such matrices must clearly satisfy $A + BA^{-1}B \succ 0$, Algorithm~\ref{alg:inverse4} and Proposition~\ref{prop:threshold2} will apply verbatim to them.

\section{Numerical experiments}\label{sec:expr}

We present results from numerical experiments comparing the speed and accuracy of Frobenius inversion (Algorithms~\ref{alg:Frob:linear}, \ref{alg:inverse3}, \ref{alg:inverse4}) with standard methods via LU and Cholesky decompositions (Algorithms~\ref{alg:LU1}, \ref{alg:Cholesky2}). We begin by comparing Algorithms~\ref{alg:LU1} and \ref{alg:inverse3}, followed by a variety of common tasks: linear systems, matrix sign function,  Sylvester equations, Lyapunov equations, polar decomposition, and rounding up with a comparison of  Algorithms~\ref{alg:inverse4} and \ref{alg:Cholesky2} on the inversion of Hermitian positive matrices.  These results show that algorithms based on Frobenius inversion are more efficient than standard ones based on LU or Cholesky decompositions, with negligible loss in accuracy, confirming Theorem~\ref{thm:threshold}, Propositions~\ref{prop:threshold} and \ref{prop:threshold2}. In all experiments, we repeat our random trials ten times and record average time taken and average forward or backward error. All codes are available at \url{https://github.com/zhen06/Complex-Matrix-Inversion}.

\subsection{Matrix inversion}\label{sec:inv}

For our speed experiments, we generate $X = A + iB \in \mathbb{C}^{n \times n}$ with entries of $A,B \in \mathbb{R}^{n \times n}$ sampled uniformly from $[0,1]$ and  $n$ from $3600$ to  $6000$.
\begin{figure}[!h]
    \centering
    \begin{subfigure}[b]{0.49 \textwidth}
        \includegraphics[width = \textwidth]{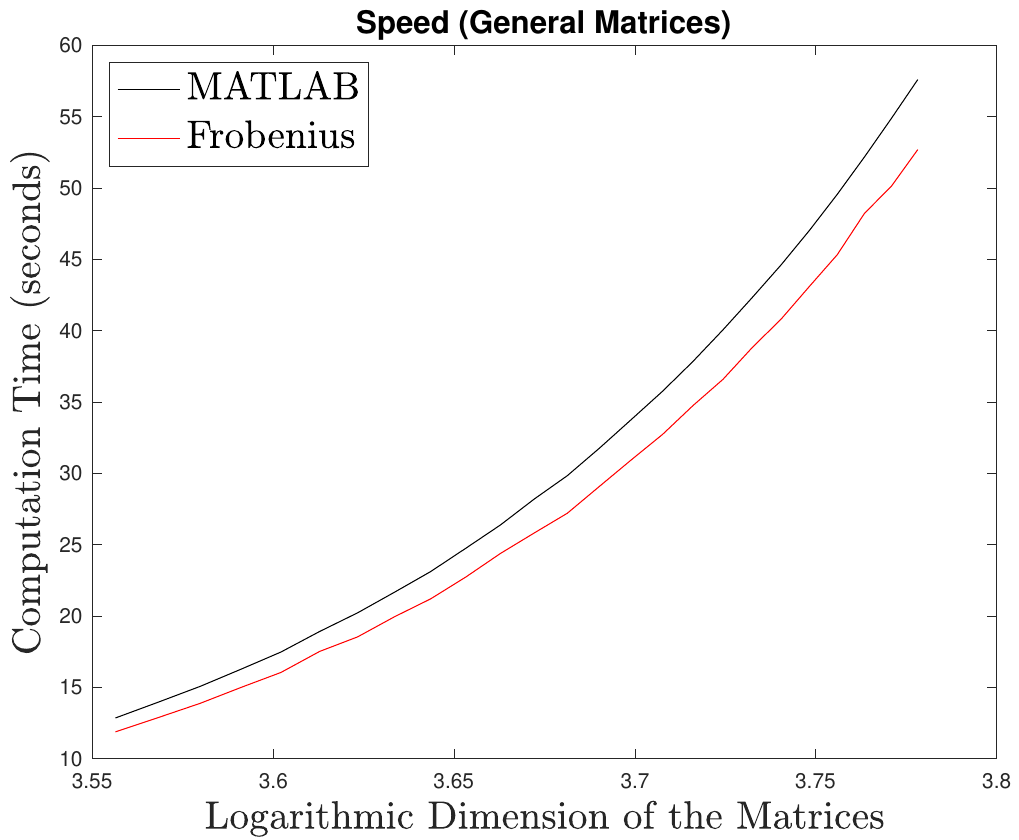}
    \end{subfigure}
    \hfill
    \begin{subfigure}[b]{0.49 \textwidth}
        \includegraphics[width = \textwidth]{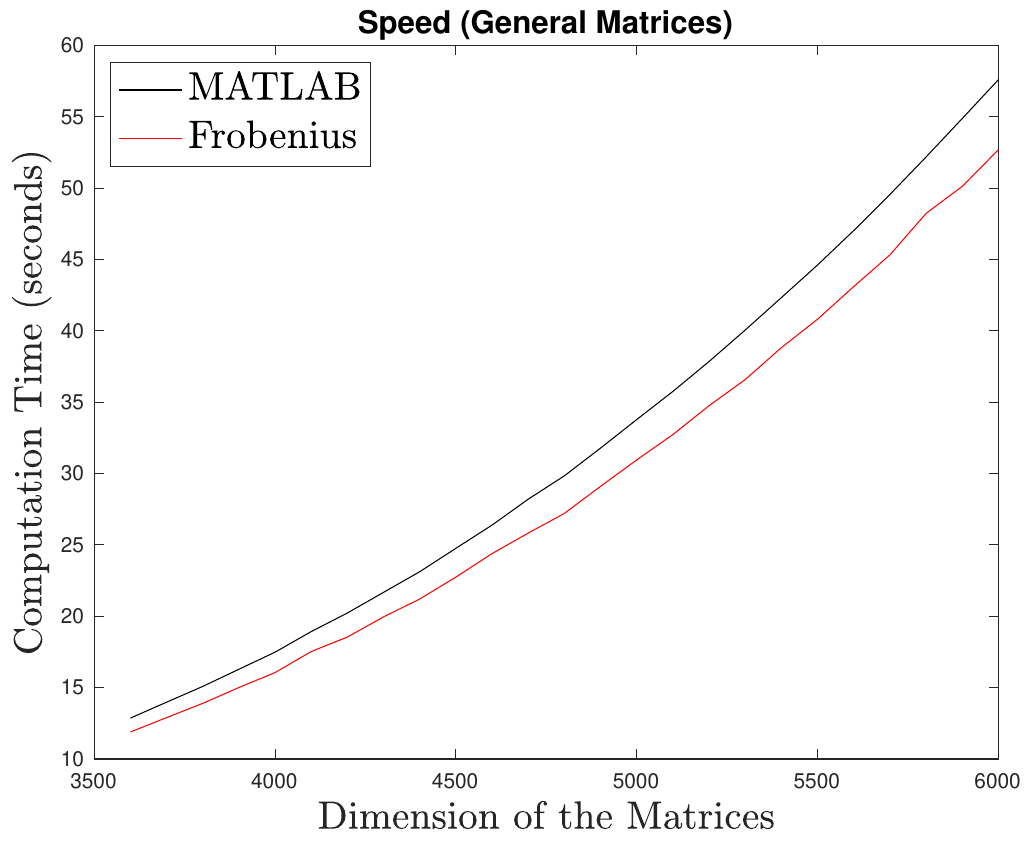}
    \end{subfigure}
    \caption{Time taken versus log-dimension (\emph{left}) and dimension (\emph{right}) of matrix.}
    \label{expr:speed}
\end{figure}

Figure~\ref{expr:speed} shows the times taken for \textsc{Matlab}'s built-in inversion (Algorithm~\ref{alg:LU1}), i.e., directly performing LU decomposition in complex arithmetic, and Frobenius inversion with LU decomposition in real arithmetic (Algorithm~\ref{alg:inverse3}). They are  plotted against matrix dimension $n$, using two different scales for the horizontal axis.  As predicted by Proposition~\ref{prop:threshold}, Frobenius inversion is indeed faster than \textsc{Matlab}'s inversion, with a widening gap as $n$ grows bigger.

For our accuracy experiments, we want some control over the condition numbers of our random matrices to reduce conditioning as a factor affecting accuracy. We generate a random $A \in \mathbb{R}^{n \times n}$ with condition number $\kappa$: first generate a random orthogonal $Q \in \O_n(\mathbb{R})$ by QR factoring a random $Y  \in \mathbb{R}^{n \times n}$ with entries sampled uniformly from $[-1,1]$; next generate a random diagonal $\Lambda = \diag(\lambda_1,\dots,\lambda_n) \in \mathbb{R}^{n \times n}$ with $\lambda_1 = \pm \kappa$, $\lambda_n = \pm 1$, signs assigned randomly, and $\lambda_2,\dots,\lambda_{n-1} \in [-\kappa, -1] \cup [1,\kappa]$  sampled uniform randomly; then set $A \coloneqq Q \Lambda Q^\tp / \lVert \Lambda \rVert_\F$. We generate $B \in \mathbb{R}^{n \times n}$ in the same way. So $\kappa(A) =\kappa(B)= \kappa$. We also check that $\kappa(X)$ is on the same order of magnitude as $\kappa$ or otherwise discard $X$. In the plots below, we set $\kappa = 10$ and increase $n$ from $2$ through  $4096$. 

Accuracy is measured by left and right \textit{relative residuals} defined respectively as
\begin{equation}\label{eq:backerr}
    \res_L(X,\widehat{Y}) \vcentcolon = \frac{\lVert \widehat{Y}X - I \rVert_{\max}}{\lVert X \rVert_{\max} \lVert \widehat{Y} \rVert_{\max}}, \qquad  \res_R(X,\widehat{Y}) \vcentcolon = \frac{\lVert X\widehat{Y} - I \rVert_{\max}}{\lVert X \rVert_{\max} \lVert \widehat{Y} \rVert_{\max}}
\end{equation}
where $\widehat{Y}$ is the computed inverse of $X$ and the \emph{max norm} is
\begin{equation} \label{eq:maxnorm}
    \lVert A + i B \rVert_{\max} \vcentcolon = \max (\lVert A \rVert_{\max}, \lVert B \rVert_{\max}) \vcentcolon = \max \Bigl(\max_{i,j =1,\dots,n} |a_{ij}|, \max_{i,j = 1,\dots,n} |b_{ij}| \Bigr).
\end{equation}
Figure~\ref{fig:acc} shows the left and right relative residuals computed by \textsc{Matlab}'s built-in inversion (Algorithm~\ref{alg:LU1}) and Frobenius inversion  (Algorithm~\ref{alg:inverse3}), plotted against matrix dimension $n$. At first glance, Frobenius inversion is less accurate than \textsc{Matlab}'s inversion. But one needs to look at the scale of the vertical axis --- the two algorithms give essentially the same results to machine precision ($15$ decimal digits of accuracy), any difference can be safely ignored for all intents and purposes.
\begin{figure}[!htb]
    \centering
    \begin{subfigure}[b]{0.49 \textwidth}
        \includegraphics[width = \textwidth]{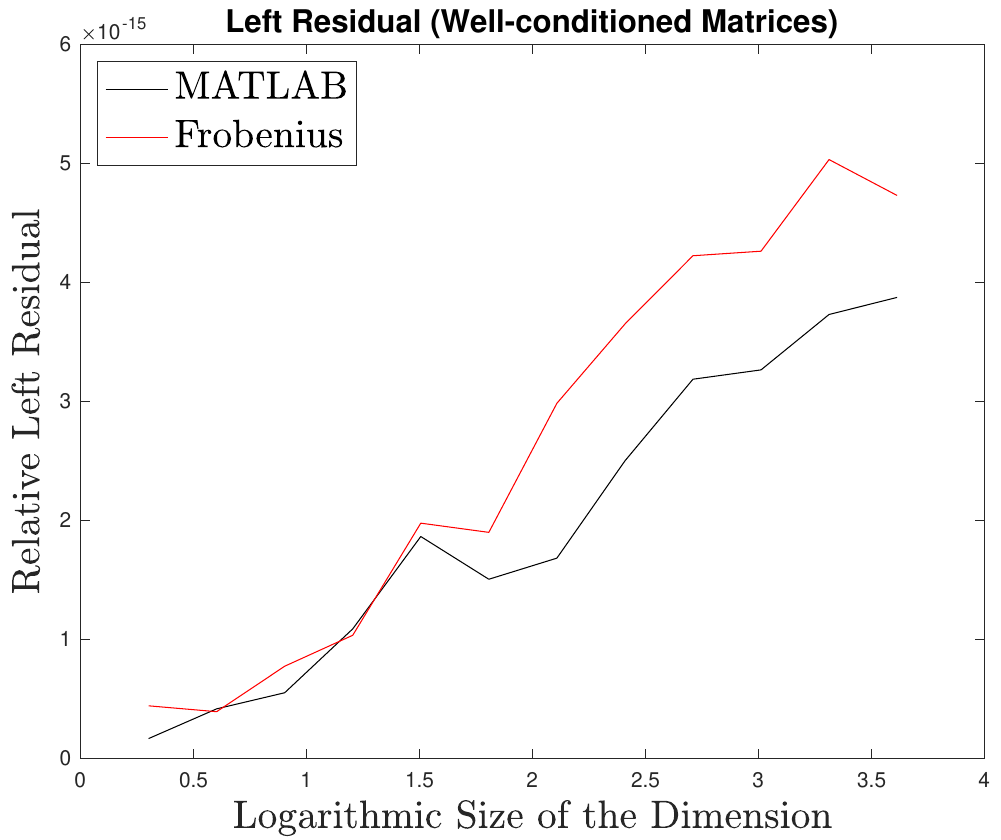}
    \end{subfigure}
    \hfill
    \begin{subfigure}[b]{0.49 \textwidth}
        \includegraphics[width = \textwidth]{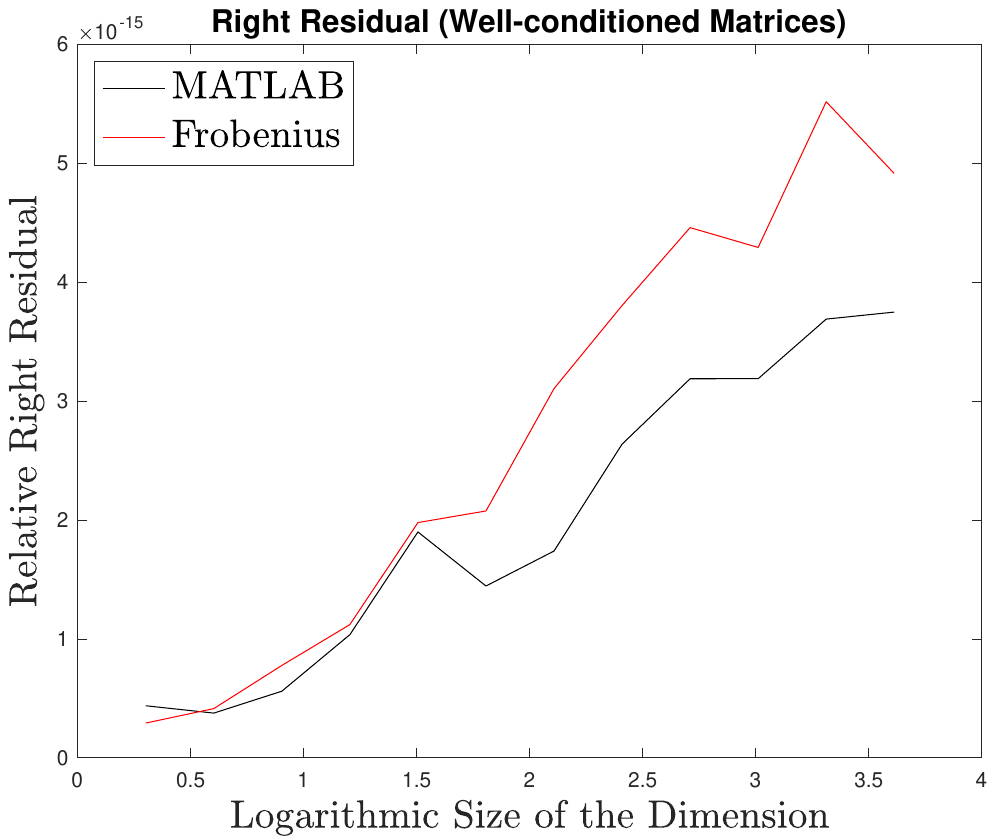}
    \end{subfigure}
    \caption{Relative left and right residuals of Frobenius inversion versus \textsc{Matlab} built-in inversion. Note that scale of the vertical axis is $10^{-15}$.}
    \label{fig:acc}
\end{figure}

\subsection{Solving linear systems}\label{sec:linexp}

It is remarkable that Frobenius inversion shows nearly no loss in accuracy as measured by backward error. For the matrix dimensions in Section~\ref{sec:inv}, forward error experiments are too expensive due to the cost of finding exact inverse. To get a sense of the forward errors, we look at a problem intimately related to matrix inversion --- solving linear systems.

We use the same matrices generated in Section~\ref{sec:inv} and generate two vectors $x,y\in \mathbb{R}^n$ with entries sampled uniformly from $[-1,1]$. We set $c+id \coloneqq (A+iB)(x+iy)$ and solve the complex linear system $(A+iB)(x+iy) = c+id$ to get a computed solution $\widehat{x} + i\widehat{y}$ using three methods: (i) Frobenius inversion (Algorithm~\ref{alg:Frob:linear}), (ii) complex LU factorization, and (iii) augmented system $\begin{bsmallmatrix} A & -B \\ B & A \end{bsmallmatrix} \begin{bsmallmatrix} x \\ y \end{bsmallmatrix} = \begin{bsmallmatrix} c \\ d \end{bsmallmatrix}$; we rely on \textsc{Matlab}'s \texttt{mldivide} (i.e., the `backslash' operator) for the last two.

\begin{figure}[!htb]
    \centering
    \includegraphics[scale = 0.5]{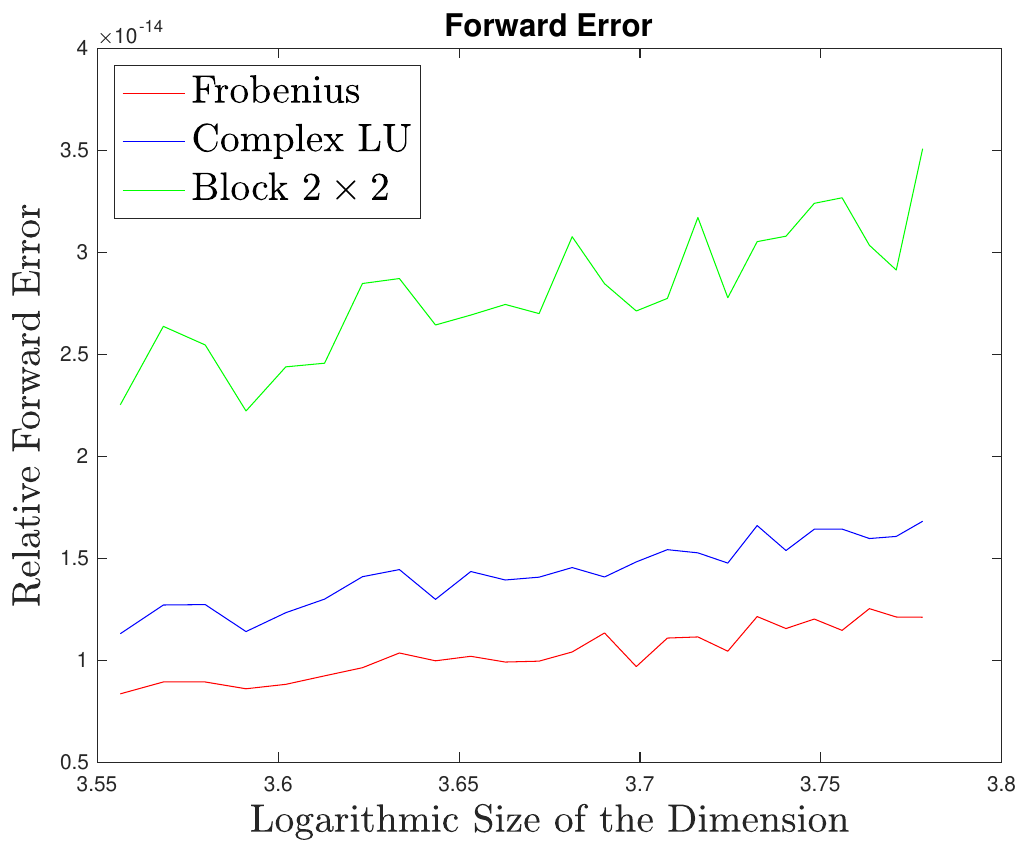}
    \caption{Linear systems with with Frobenius inversion and \textsc{Matlab}'s backslash.}
    \label{fig:linear}
\end{figure}

Figure~\ref{fig:linear} shows the relative forward errors  $\lVert\widehat{x} + i \widehat{y} - (x  + iy) \rVert_{\max} / \lVert x + i y \rVert_{\max}$ plotted against  matrix dimension. The conclusion is clear: Frobenius inversion gives the most accurate result.

\subsection{Matrix sign function}\label{sec:sign}

The matrix sign function appears in a wide range of problems such as algebraic Riccati equation \cite{sign_original}, Sylvester equation \cite{polar_sylvester_higham,sign_original}, polar decomposition \cite{polar_sylvester_higham}, and spectral decomposition \cite{eigen_ref1,eigen_ref3,eigen_ref2,eigen_ref4,eigen_ref5}. 
For $X \in \GL_n(\mathbb{C})$ with Jordan decomposition $X = ZJZ^{-1}$ where its Jordan canonical form $J = \begin{bsmallmatrix}
J_{\p} & 0 \\
0 & J_{\m}
\end{bsmallmatrix}$  is partitioned into $J_{\p} \in \mathbb{C}^{p \times p}$ with positive real part and $J_{\m} \in \mathbb{C}^{q \times q}$ with negative  real part, its matrix sign function is defined to be
\begin{equation}\label{eq:sign}
    \sign(X) = Z 
    \begin{bmatrix}
    I_p & 0 \\
    0 & -I_q
    \end{bmatrix}Z^{-1}.
\end{equation}
Since the Jordan decomposition cannot be determined in finite precision \cite{GW}, its definition does not offer a viable way of computation. The standard way to evaluate the matrix sign function is to use Newton iterations \cite{functions_of_matrices_higham,sign_original}:
\begin{equation}\label{eq:iter}
    X_{k+1} = \frac{1}{2}(X_k + X_k^{-1}), \quad k =0,1,2,\dots, \quad X_0 = X.
\end{equation}
This affords a particularly pertinent test for Frobenius inversion as it involves repeated inversion. Our stopping condition is given by the relative change in $X_k$: We stop when $ \lVert X_k - X_{k-1} \rVert_{\max} / \lVert X_k \rVert_{\max} \leq \varepsilon = 10^{-3}$ or when $k \ge k_{\max} = 100$.

The definition via Jordan decomposition  is useful for generating random examples for our tests: We generate a random diagonal $J \in \mathbb{C}^{n \times n}$ whose first $p \approx n/2$ diagonal entries have positive real parts and the rest have negative real parts, avoiding near zero values, and with $n$ from $2100$ to $4000$. We generate a random $Z \in \GL_n(\mathbb{C})$ with real and imaginary parts of its entries $z_{ij}$ sampled uniformly from $[-1,1]$. We set $X \coloneqq Z J Z^{-1}$. In this way we obtain $\sign(X)$ via \eqref{eq:sign} as well.
 
In each iteration of \eqref{eq:iter}, we compute $X_k^{-1}$ with \textsc{Matlab}'s inversion in complex arithmetic (Algorithm~\ref{alg:LU1})  and Frobenius inversion in real arithmetic (Algorithm~\ref{alg:inverse3}). Accuracy is measured by relative forward error $\lVert \sign(X) - \widehat{S} \rVert_{\max}/\lVert \sign(X) \rVert_{\max}$. 
From Figure~\ref{fig:sign}, we see that Frobenius inversion offers an improvement in speed at the cost of slightly less accurate results.
\begin{figure}[!htb]
    \centering
    \begin{subfigure}[b]{0.49 \textwidth}
        \includegraphics[width = \textwidth]{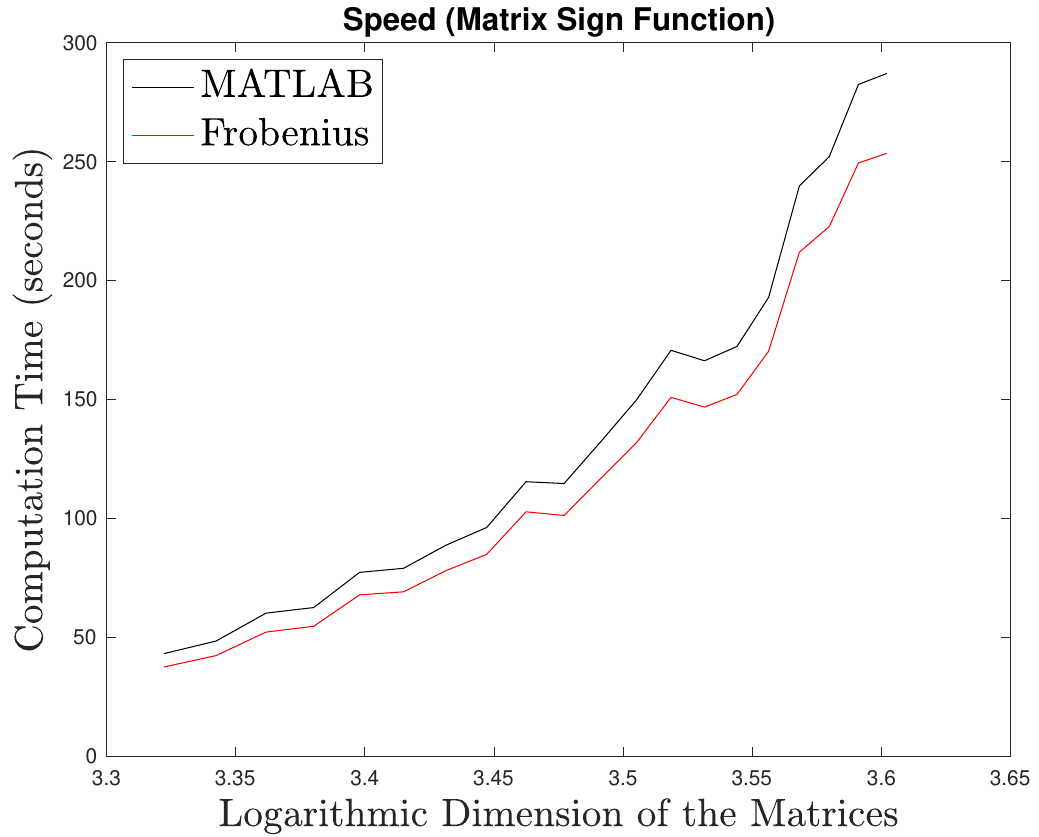}
    \end{subfigure}
    \hfill
    \begin{subfigure}[b]{0.49 \textwidth}
        \includegraphics[width = \textwidth]{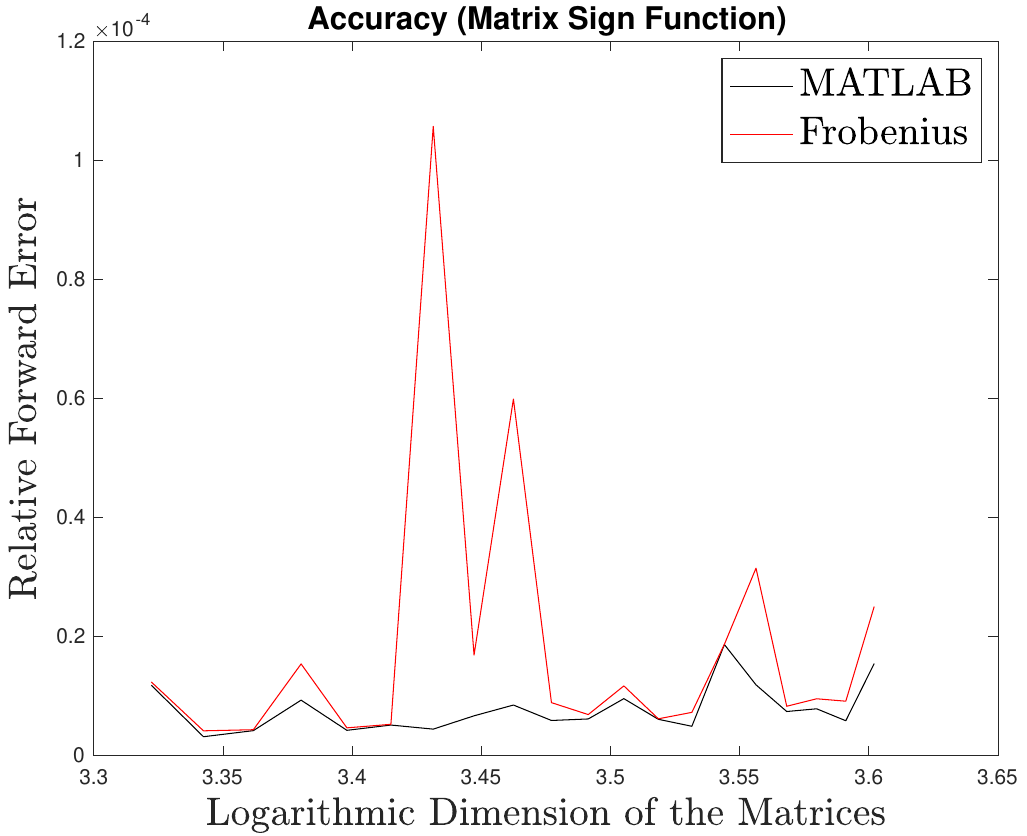}
    \end{subfigure}
    \caption{Matrix sign function with Frobenius inversion and \textsc{Matlab}'s inversion.}
    \label{fig:sign}
\end{figure}
 
\subsection{Sylvester and Lyapunov equations}

One application of the matrix sign function is to seek, for given $A \in \mathbb{C}^{p \times p}$, $B \in \mathbb{C}^{q\times q}$, and $C \in \mathbb{C}^{p \times q}$, a solution $Y \in \mathbb{C}^{p \times q}$ for the \emph{Sylvester equation}:
\[
    AY + YB = C,
\]
or its special case with $B = A^\ha$, the \emph{Lyapunov equation} \cite{HighamBook}. As noted in \cite{polar_sylvester_higham,sign_original}, if $\sign(A) = I_p$ and $\sign(B) = I_q$, then
\[
    \sign \left( \begin{bmatrix}
    A & -C \\
    0 & -B
    \end{bmatrix} \right)
    =
    \begin{bmatrix}
    I_p & -2Y \\
    0 & -I_q
    \end{bmatrix}.
\]
Thus the Newton iterations \eqref{eq:iter} applied to $ X_0 = \begin{bsmallmatrix}
    A & -C \\
    0 & -B
    \end{bsmallmatrix}$  will converge to
$\begin{bsmallmatrix}
    I & -2Y \\
    0 & -I
\end{bsmallmatrix}$, yielding the solution $Y$ of Sylvester equation in the limit.

As usual, we `work backwards' to generate  $A \in \mathbb{C}^{p \times p}$ with  $\sign(A) = I_p$, $B \in \mathbb{C}^{q\times q}$ with  $\sign(B) = I_q$, and $C \in \mathbb{C}^{p \times q}$ with $p$ and $q$ taking values between $1050$ and $2000$.  First we generate a random $Z \in \GL_p(\mathbb{C})$ by sampling the real and imaginary parts of its entries in $[-1,1]$ uniformly; next we generate a random diagonal $J \in \mathbb{C}^{n \times n}$ whose diagonal entries have positive real parts sampled from the interval $[9,10]$; then we set $A \coloneqq Z J Z^{-1} \in \mathbb{C}^{p \times p}$. We generate $B \in \mathbb{C}^{q \times q}$ in the same way. We  generate a random $Y \in \mathbb{C}^{p \times q}$ with real and imaginary parts of its entries sampled uniformly from $[-1,1]$ and set $C \coloneqq A Y + Y B$. 

Using the same stopping condition in Section~\ref{sec:sign} with a tolerance of $\varepsilon = 10^{-1}$ and $k_{\max} = 100$ maximum iterations, we compute a solution $\widehat{Y}$ with the Newton iterations  \eqref{eq:iter}. Accuracy is measured by relative forward error  $\lVert Y - \widehat{Y} \rVert_{\max}/\lVert Y \rVert_{\max}$.

Figure~\ref{fig:syl} gives the results for Sylvester and Lyapunov equations, showing that in both cases Frobenius inversion is faster than \textsc{Matlab}'s inversion with no difference in accuracy.  Indeed, at a scale of $10^{-5}$ for the vertical axis, the two graphs in the accuracy plot for Lyapunov equation (bottom right plot of Figure~\ref{fig:syl}) are indistinguishable. The  accuracy plot for Sylvester equation (top right plot of Figure~\ref{fig:syl}) uses a finer vertical scale of $10^{-8}$; but had we used a scale of $10^{-5}$, the two graphs therein would also have been indistinguishable.

\begin{figure}[!htb]
    \centering
    \begin{subfigure}[b]{0.49 \textwidth}
        \includegraphics[width = \textwidth]{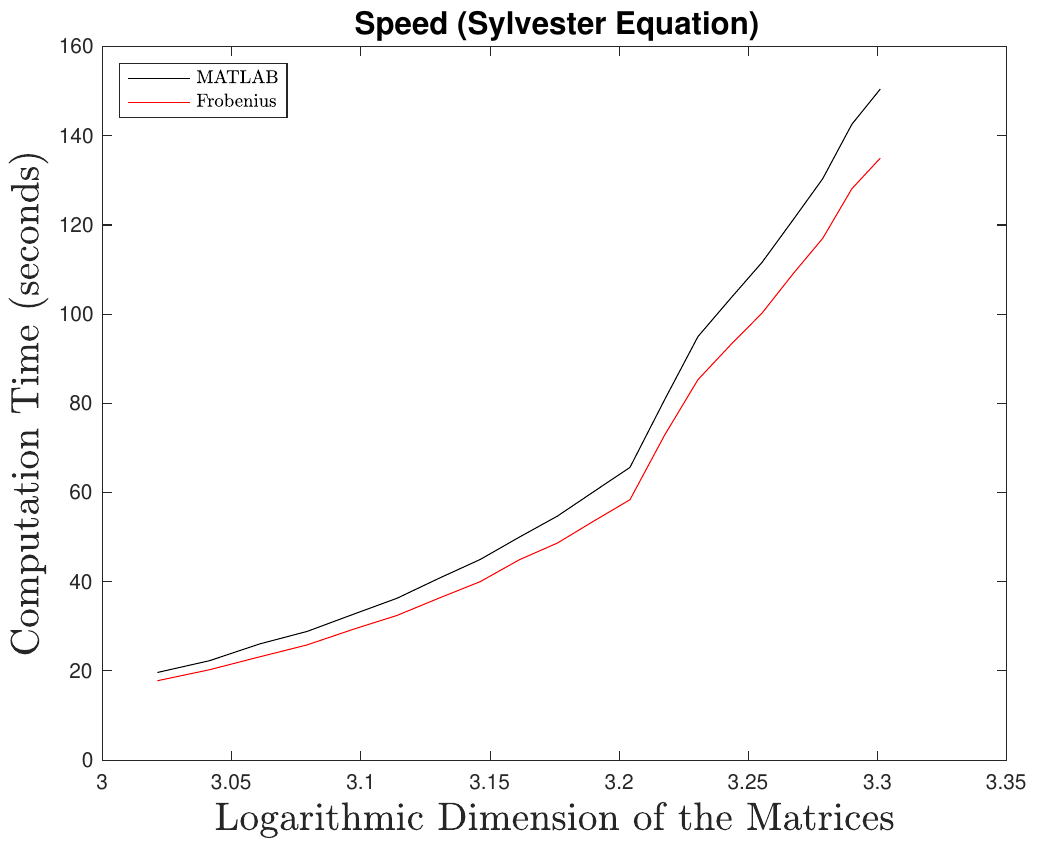}
    \end{subfigure}
    \hfill
    \begin{subfigure}[b]{0.49 \textwidth}
        \includegraphics[width = \textwidth]{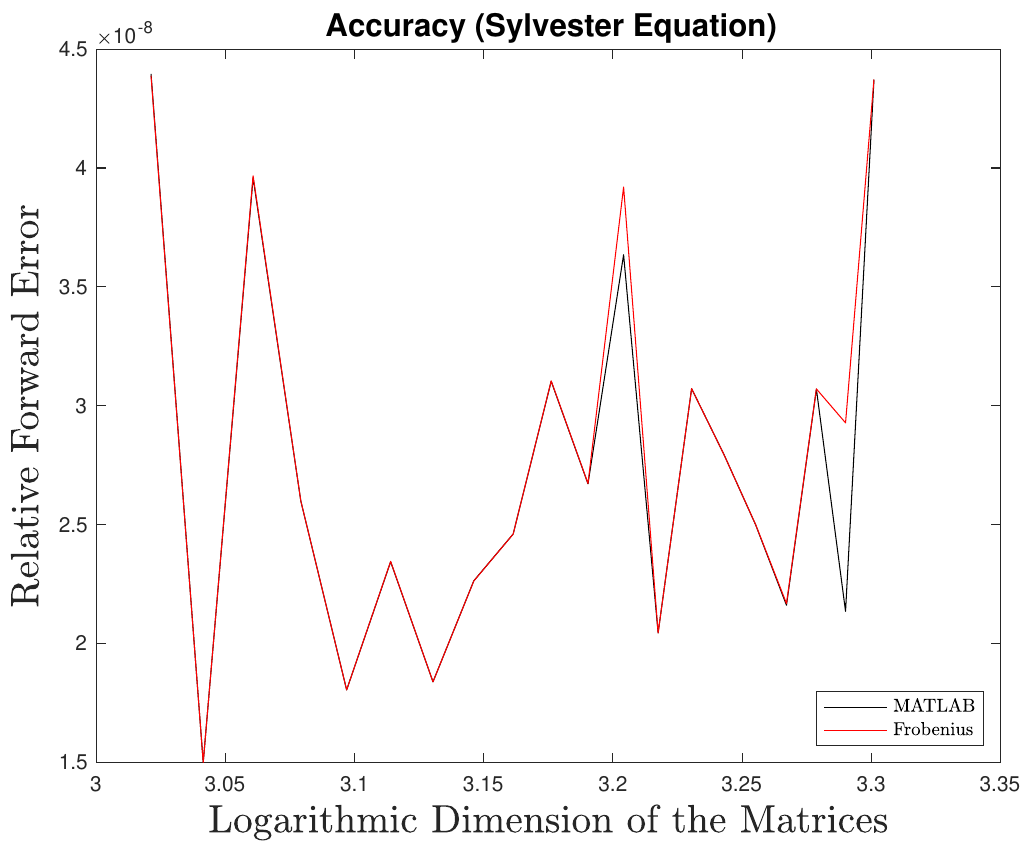}
    \end{subfigure}
    \begin{subfigure}[b]{0.49 \textwidth}
        \includegraphics[width = \textwidth]{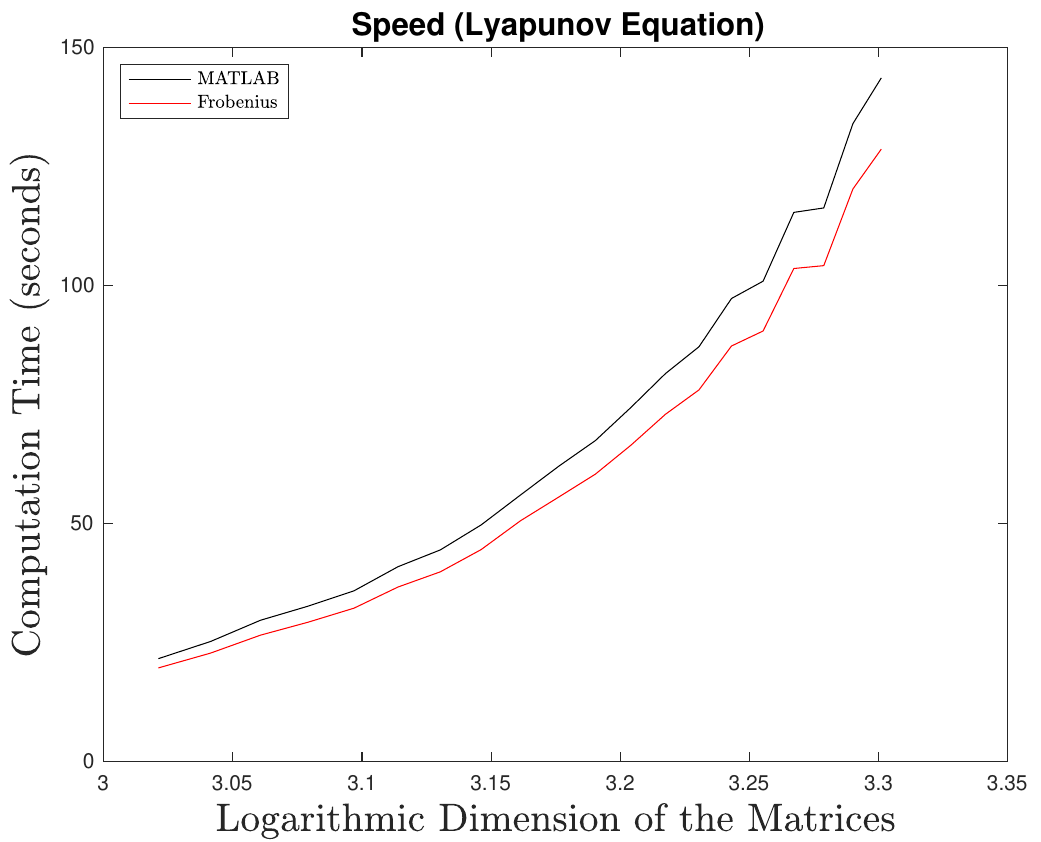}
    \end{subfigure}
    \hfill
    \begin{subfigure}[b]{0.49 \textwidth}
        \includegraphics[width = \textwidth]{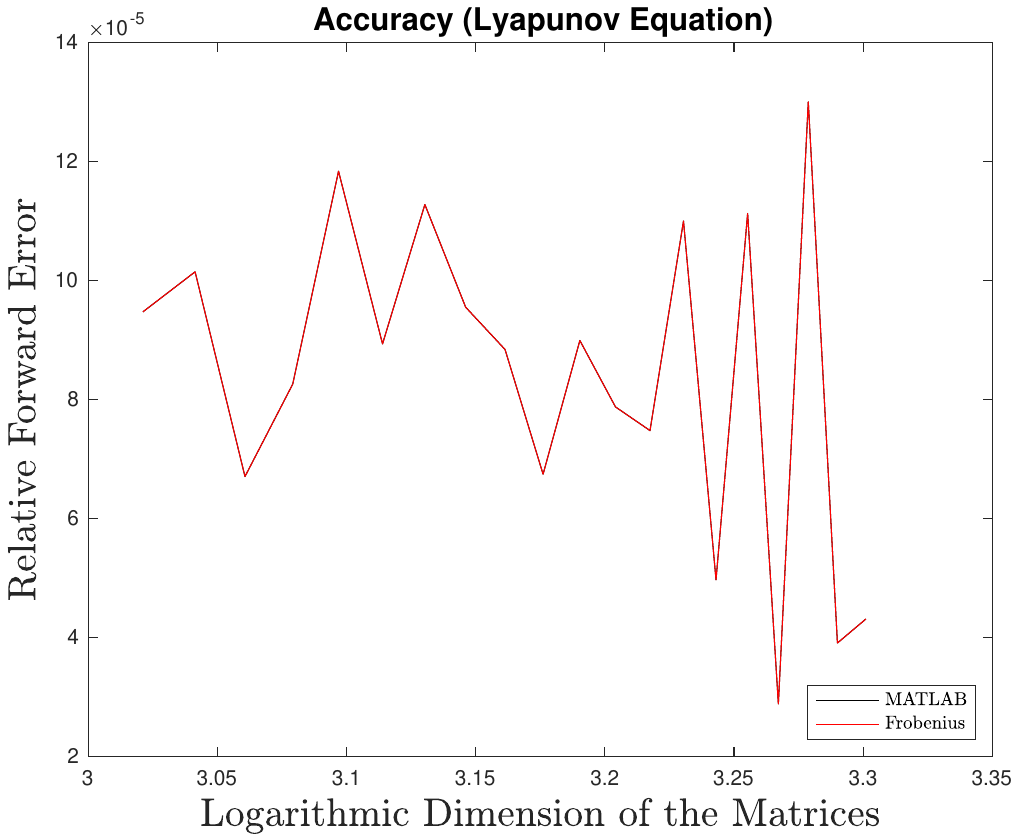}
    \end{subfigure}
    \caption{Sylvester (\emph{top}) and Lyapunov  (\emph{bottom})  equations with Frobenius inversion and \textsc{Matlab}'s inversion. Note there are two graphs in the bottom right plot.}
    \label{fig:syl}
\end{figure}

\subsection{Polar decomposition}

Another application of the matrix sign function is to polar decompose a given $X \in \mathbb{C}^{n \times n}$ into $X = Q P$ with $Q \in \U_n(\mathbb{C})$ and $P \in \mathbb{C}^{n \times n}$  Hermitian positive semidefinite. This is based on the observation \cite{polar_ref2,polar_sylvester_higham,polar_ref3} that
\[
    \sign \biggl(
    \begin{bmatrix}
    0 & X \\
    X^\ha  & 0
    \end{bmatrix} \biggr)
    = \begin{bmatrix}
    0 & Q \\
    Q^\ha  & 0
    \end{bmatrix}.
\]
Here the Newton iterations \eqref{eq:iter} take a slightly different form
\begin{equation}\label{eq:iter2}
    X_{k+1} = \frac{1}{2}(X_k + X_k^{-\ha}), \quad k =0,1,2,\dots, \quad X_0 = X.
\end{equation}

We generate random $Y, Z \in \mathbb{C}^{n \times n}$ with real and imaginary parts of its entries sampled uniformly from $[-1,1]$. We  then QR factorize $Y = UR$ and l set $P \coloneqq Z^\ha  Z$ and $X = UP$. The value of $n$ runs from $2100$ to $4000$.

Using the same stopping condition in Section~\ref{sec:sign} with a tolerance of $\varepsilon = 10^{-3}$ and $k_{\max} = 100$ maximum iterations, we compute a solution $\widehat{Q}$ with the Newton iterations \eqref{eq:iter2}, with $X_k^{-\ha}$ computed by either Frobenius inversion or \textsc{Matlab}'s inversion. We then set $\widehat{P} = \widehat{Q}^\ha  X$.  Accuracy is measured by relative forward errors  $\lVert Q - \widehat{Q} \rVert_{\max}/\lVert Q \rVert_{\max}$ and $\lVert P - \widehat{P} \rVert_{\max}/\lVert P \rVert_{\max}$.

\begin{figure}[!htb]
    \centering
    \begin{subfigure}[b]{0.49 \textwidth}
        \includegraphics[width = \textwidth]{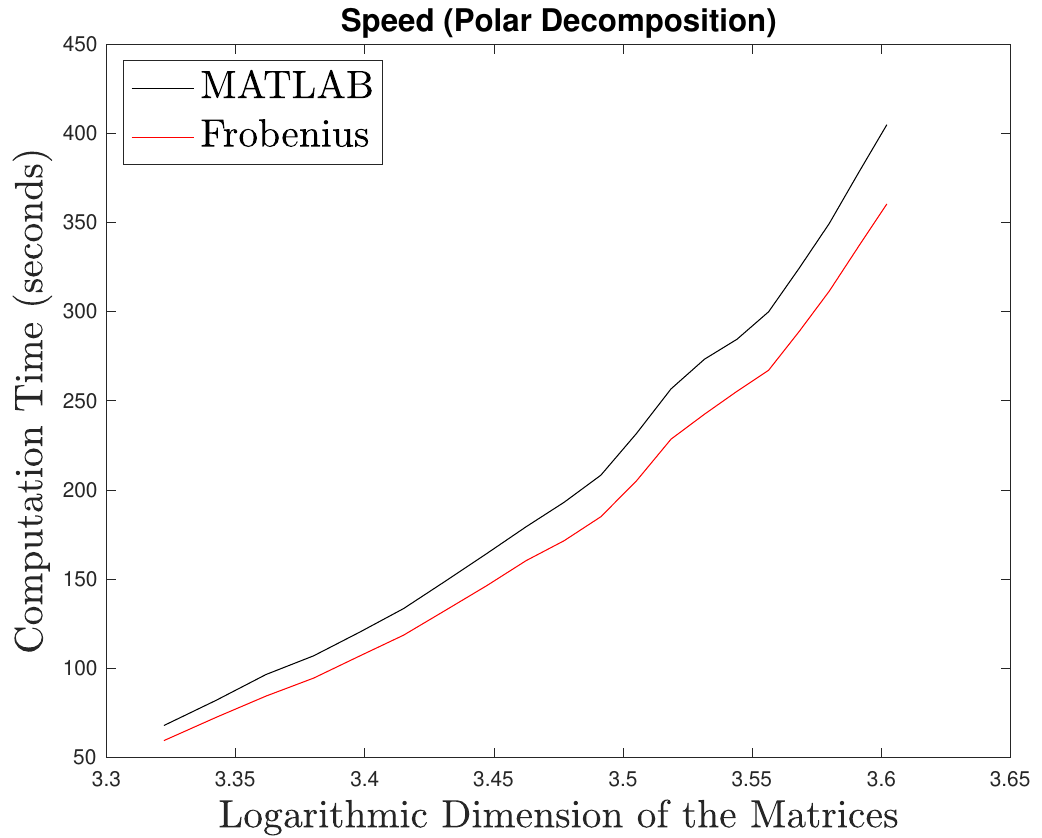}
    \end{subfigure}
    \hfill
    \begin{subfigure}[b]{0.49 \textwidth}
        \includegraphics[width = \textwidth]{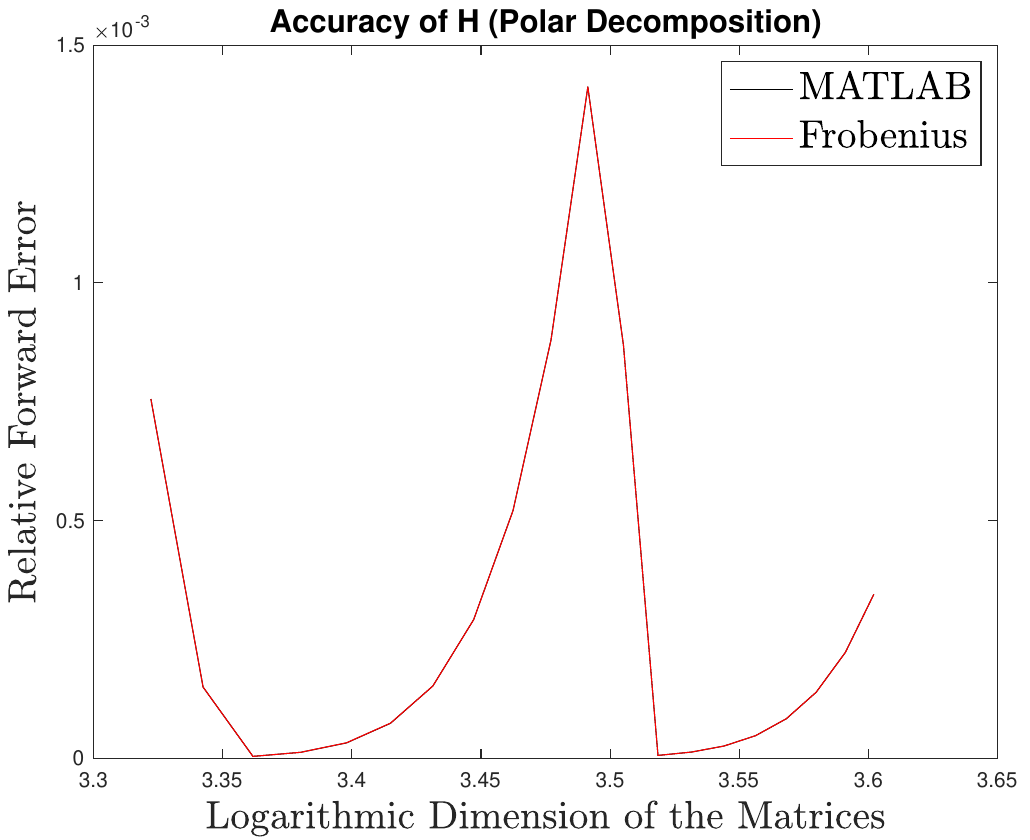}
    \end{subfigure}
    \caption{Polar decomposition with Frobenius inversion and \textsc{Matlab}'s inversion. Note that there are two graphs in the right plot}
    \label{fig:polar}
\end{figure}

Figure~\ref{fig:polar} again shows that Frobenius inversion is faster than \textsc{Matlab}'s built-in inversion with near-identical accuracy. Indeed, at a scale of $10^{-3}$ for the vertical axis, the two graphs in the accuracy plot (right plot of Figure~\ref{fig:polar}) are indistinguishable.

\subsection{Hermitian positive definite matrix inversion}\label{sec:HPDexp}

We repeat experiments in Section~\ref{sec:inv} on Hermitian positive definite matrices for our variant of Frobenius inversion (Algorithm~\ref{alg:inverse4}) and \textsc{Matlab}'s built-in inversion based on Cholesky decomposition (Algorithm~\ref{alg:Cholesky2}). For comparison, we also include Algorithms~\ref{alg:LU1} and \ref{alg:inverse3} that do not exploit Hermitian positive definiteness.

For our speed experiments, we generate a random Hermitian positive definite  $X \coloneqq (A + i B)^\ha(A + i B)  + 0.01 I \in \mathbb{C}^{n \times n}$ with $A,B \in \mathbb{R}^{n \times n}$ sampled uniformly from $[-1,1]$ and $n$ from $3600$ to $6000$. We plot the results in Figure~\ref{pos:speed}, with two different scales for the horizontal axis.
\begin{figure}[!htb]
    \centering
    \begin{subfigure}[b]{0.49 \textwidth}
        \includegraphics[width = \textwidth]{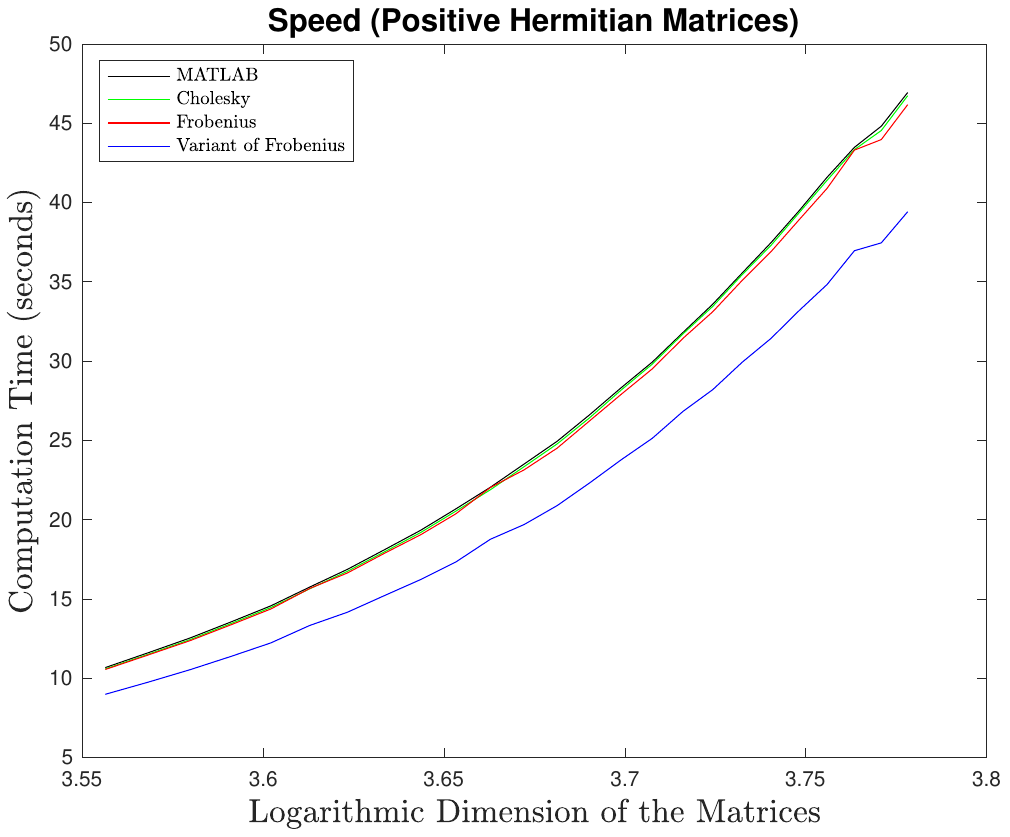}
    \end{subfigure}
    \hfill
    \begin{subfigure}[b]{0.49 \textwidth}
        \includegraphics[width = \textwidth]{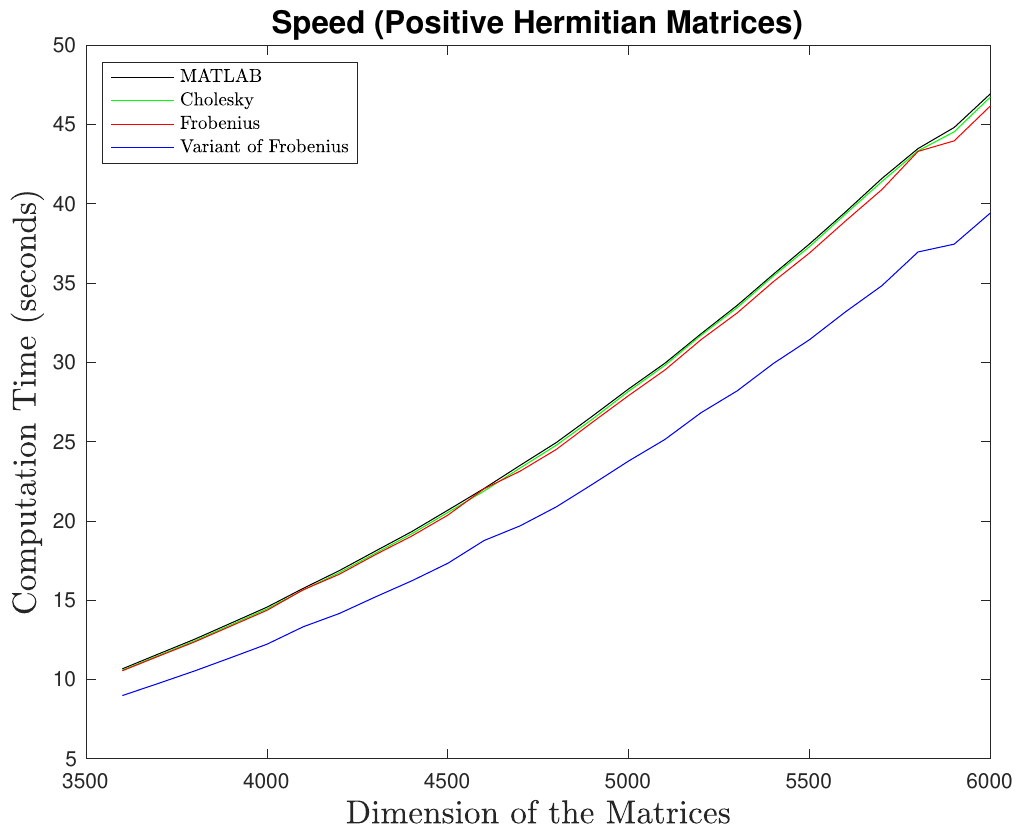}
    \end{subfigure}
    \caption{Time taken versus log-dimension (\emph{left}) and dimension (\emph{right}) of matrix.}
    \label{pos:speed}
\end{figure}

For our accuracy experiments, we control the condition numbers of our  matrices to reduce conditioning as a factor affecting accuracy. To generate a random Hermitian positive definite $X \in \mathbb{C}^{n \times n}$ with condition number $\kappa$, first we generate a random unitary $Q \in \U_n(\mathbb{C})$ by QR factoring a random $Y  \in \mathbb{C}^{n \times n}$ with real and imaginary parts of its entries sampled uniformly from $[-1,1]$; next we generate a random diagonal $\Lambda = \diag(\lambda_1,\dots,\lambda_n) \in \mathbb{R}^{n \times n}$ with $\lambda_1 = \kappa $, $\lambda_n = 1$, and $\lambda_2,\dots,\lambda_{n-1} \in [1,\kappa]$  sampled uniform randomly; then we set $X \coloneqq Q \Lambda Q^\ha / \lVert \Lambda \rVert_\F$.  So $\kappa(X) = \kappa$. In the plots below, we set $\kappa = 10$ and increase $n$ from $2$ through  $4096$. 

\begin{figure}[!htb]
    \centering
    \begin{subfigure}[b]{0.49 \textwidth}
        \includegraphics[width = \textwidth]{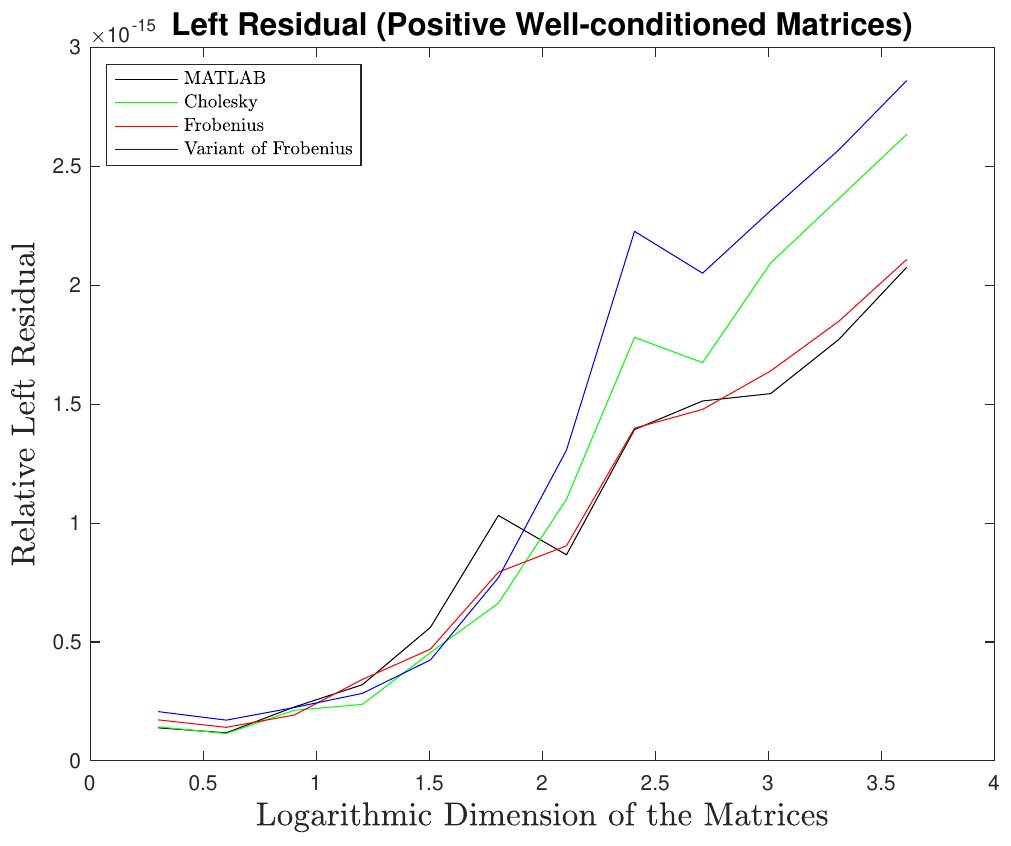}
    \end{subfigure}
    \hfill
    \begin{subfigure}[b]{0.49 \textwidth}
        \includegraphics[width = \textwidth]{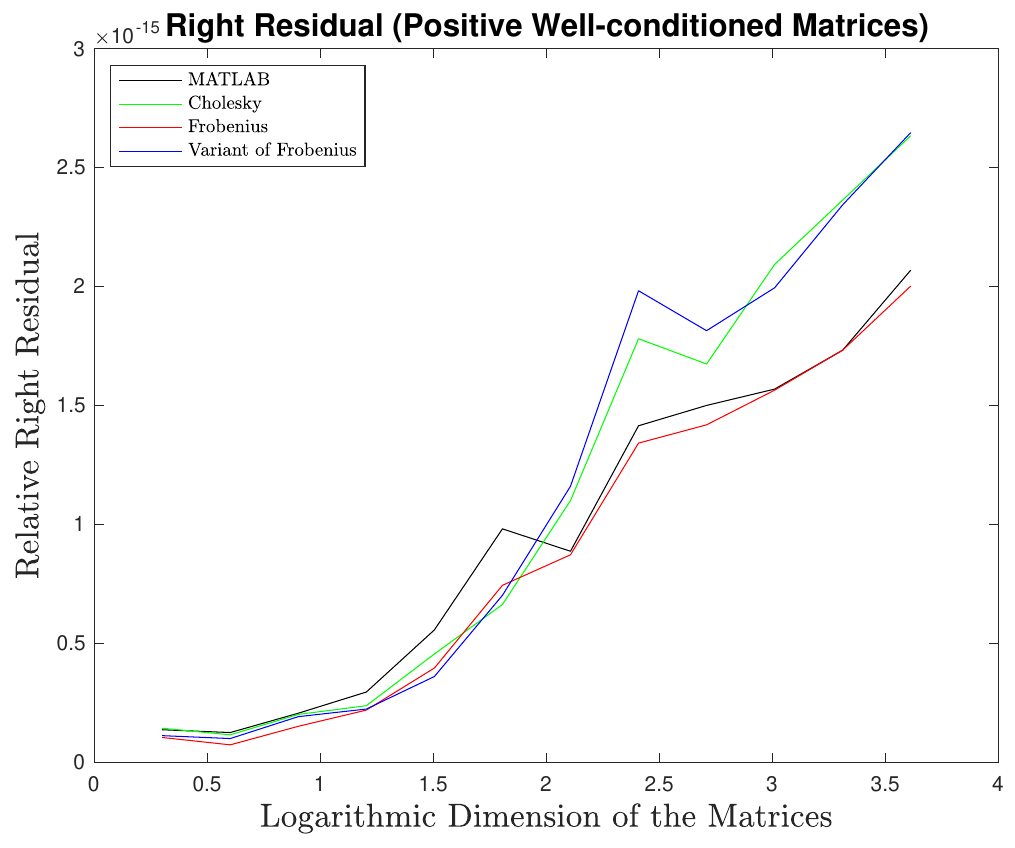}
    \end{subfigure}
    \caption{Relative left and right residuals of Algorithms~\ref{alg:LU1}, \ref{alg:inverse3}, \ref{alg:inverse4}, \ref{alg:Cholesky2}. Note that scale of the vertical axis is $10^{-15}$.}
    \label{pos:acc}
\end{figure}

Accuracy is measured by left and right relative residuals as defined in equation~\eqref{eq:backerr}, with results plotted in Figure~\ref{pos:acc}, which shows the left and right relative residuals computed by Algorithms~\ref{alg:LU1}, \ref{alg:inverse3}, \ref{alg:inverse4}, \ref{alg:Cholesky2}  plotted against matrix dimension $n$. The important thing to note is the scale of the vertical axes --- all four algorithms give essentially the same results up to machine precision.

\section{Conclusion}

We hope our effort here will rekindle interest in this beautiful algorithm. In future work, we plan to provide rounding error analysis for Frobenius inversion, discuss  its relation with Strassen-style algorithms, and its advantage in solving linear systems with a large number of right-hand sides.


\subsection*{Acknowledgment}

ZD acknowledges the support of DARPA HR00112190040 and NSF ECCF 2216912. LHL acknowledges the support of DARPA HR00112190040, NSF DMS 1854831, and a Vannevar Bush Faculty Fellowship ONR  N000142312863. KY acknowledges the support of CAS Project for Young Scientists in Basic Research, Grant No.~YSBR-008, National Key Research and Development Project No.~2020YFA0712300, and National Natural Science Foundation of China Grant No.~12288201.

\bibliographystyle{abbrv}
\bibliography{inversion-ref}
\end{document}